\newcommand{\R}{\mathbb R}
\newcommand{\N}{\mathbb N}
\newcommand{\C}{\mathbb C}
\newcommand{\E}{\mathbb E}
\newcommand{\F}{\mathbb F}
\renewcommand{\Re}{\mathop{\text{\upshape{Re}}}}
\renewcommand{\Im}{\mathop{\text{\upshape{Im}}}}
\renewcommand{\Im}{\operatorname{Im}}
\newcommand{\ord}{\operatorname{ord}}
\newcommand{\diag}{\operatorname{diag}}
\renewcommand{\epsilon}{\varepsilon}
\renewcommand{\bar}[1]{\overline{#1}}
\renewcommand{\tilde}{\widetilde}
\newtheorem{theorem}{Theorem}[section]
\newtheorem{lemma}[theorem]{Lemma}
\newtheorem{corollary}[theorem]{Corollary}
\theoremstyle{definition}
\newtheorem{definition}[theorem]{Definition}
\newtheorem{remark}[theorem]{Remark}
\newtheorem{example}[theorem]{Example}
\numberwithin{equation}{section}
\begin{document}

\title[Thermoelastic plate equation with free boundary conditions]
{Generation of semigroups for the thermoelastic plate equation with free boundary conditions}
\author{Robert Denk}
\address{Universit\"at Konstanz, Fachbereich f\"ur Mathematik und Statistik,
         78457 Konstanz, Germany}
\email{robert.denk@uni-konstanz.de}
\author{Yoshihiro Shibata}
\address{Department of Mathematical Sciences, School of Science and
Engineering, Waseda University, Ohkobu 3-4-1, Shinjuku-ku, Tokyo 169-8555,
Japan} \email{yshibata@waseda.jp}
\thanks{}
\date{June 26, 2018}

\begin{abstract}
We consider the linear thermoelastic plate equations with free boundary conditions in uniform $C^4$-domains, which includes the half-space, bounded and exterior domains. We show that the corresponding operator generates an analytic semigroup  in $L^p$-spaces for all $p\in(1,\infty)$ and has maximal $L^q$-$L^p$-regularity on finite time intervals. On bounded $C^4$-domains, we obtain exponential stability.
\end{abstract}

\subjclass[2010]{35K35; 35J40; 42B15}

\keywords{Thermoelastic plate equations; operator-valued Fourier multipliers; generation of analytic semigroups}

\maketitle

\section{Introduction}

Let $\Omega\subset\R^N$ be a domain with boundary $\Gamma$. We consider the linear thermoelastic plate equations
\begin{equation}
  \label{1-1}
  \begin{aligned}
    u_{tt} + \Delta^2 u + \Delta \theta & = f_1\quad \text{ in } (0,\infty)\times \Omega,\\
    \theta_t -\Delta \theta -\Delta u_t & = f_2 \quad \text{ in } (0,\infty)\times \Omega
  \end{aligned}
\end{equation}
with initial conditions
\begin{equation}\label{1-2}
\begin{aligned}
    u|_{t=0} & = u_0\quad\text{ in } \Omega,\\
    u_t|_{t=0} & = u_1\quad\text{ in } \Omega,\\
    \theta|_{t=0} & = \theta_0\quad\text{ in } \Omega.
  \end{aligned}
\end{equation}
System \eqref{1-1} serves as a standard simplified model for thin elastic plates with thermoelastic effects, see \cite{lagnese89}, Chapter~2, or \cite{chuesov-lasiecka10}, for a discussion of this and similar models. In \eqref{1-1},  $u(t,x)$ stands for the vertical displacement at time $t\ge 0$ and at position $x=(x_1,\dots,x_N)\in\Omega$, while $\theta(t,x)$ denotes the temperature (relative to some reference temperature) at time $t$ and position $x$. Note that we omitted all physical constants for simplicity.

Among the physically relevant boundary conditions, the maybe most complicated are the so-called free boundary conditions
\begin{equation}
  \label{1-3}
  \begin{aligned}
    \Delta u - (1-\beta) \Delta' u +\theta & = g_1 \quad \text{ on }(0,\infty)\times\Gamma,\\
    \partial_\nu \big( \Delta u + (1-\beta) \Delta' u + \theta\big) & = g_2 \quad\text{ on }(0,\infty)\times\Gamma,\\
    \partial_\nu\theta & = g_3 \quad\text{ on }(0,\infty)\times \Gamma.
  \end{aligned}
\end{equation}
which will be considered in the present paper. In \eqref{1-3}, $\Delta$ and $\Delta'$ stand for the Laplace operator in $\Omega$ and the Laplace-Beltrami operator on the boundary $\Gamma$, respectively, and $\partial_\nu$ denotes the derivative in outer normal direction. For a survey on other types of boundary conditions and generation of semigroups for them, we refer, e.g., to \cite{lasiecka-triggiani97}. The physically relevant situation is the two-dimensional case $N=2$, but we can consider \eqref{1-1}--\eqref{1-3} in any dimension.

One of the standard approaches to \eqref{1-1}-\eqref{1-3} is to set $v:=\partial_t u$ and obtain the first-order system  acting on $U:=(u,u_t,\theta)^\top$ and being of the form
\begin{equation}
  \label{1-1aa}
U_t - A(D) U = 0 \quad \text{ in } (0,\infty)\times \Omega
\end{equation}
with an operator-matrix $A(D)$ of mixed order. This equation is augmented by boundary conditions of the form
\begin{equation}
\label{1-2aa}
 B(D) U = 0 \quad \text{ on } (0,\infty)\times\Gamma.
\end{equation}
Here $A(D)$ and $B(D)$  are given by
\[ A(D) := \begin{pmatrix}
  0 & 1 & 0 \\
  -\Delta^2 & 0 & -\Delta\\
  0 &  \Delta & \Delta
\end{pmatrix},\quad
B(D) := \begin{pmatrix}
  \Delta - (1-\beta)\Delta' & 0 & 1\\
  \partial_\nu (\Delta + (1-\beta)\Delta') & 0 & 0\\
  0 & 0& \partial_\nu
\end{pmatrix}.\]
The natural space for the  $L^p$-realization of the mixed-order boundary value problem $(A(D),$ $B(D))$ is given by $\E_p^{(0)}(\Omega)$ and its solution space by $\E_p^{(2)}(\Omega)$, where
for $j\in\{0,1,2\}$ we set
\[ \E_p^{(j)}(\Omega) := H_p^{2+j}(\Omega) \times H_p^{j}(\Omega)\times H_p^j(\Omega).\]
More precisely, we define $A_{p,\Omega}$ as an unbounded operator in $\E_p^{(0)}(\Omega)$ with domain
 \[ D(A_{p,\Omega}) := \{ U\in \E_p^{(2)}(\Omega): B(D)U=0\}\]
acting as $A_{p,\Omega} U := A(D)U\;(U\in D(A_{p,\Omega}))$.
We consider uniform $C^4$-domains, see Definition~\ref{1.1} below. The main result of the present paper shows that for all $p\in (1,\infty)$, the operator $A_{p,\Omega}$ generates an analytic $C_0$-semigroup. This is a consequence of the stronger result that $A_{p,\Omega}$ has maximal $L^q$-$L^p$-regularity (Theorem~\ref{3.2}). On bounded $C^4$-domains, we obtain exponential stability (Theorem~\ref{3.6}).

\medskip

The thermoelastic plate equations has been studied by many authors, mostly in an $L^2$-setting. Many results deal with exponential stability of the associated semigroup, e.g.,  \cite{kim92},
 \cite{munos_rivera-racke95},
 \cite{liu-zheng97},
\cite{lasiecka-triggiani97},   \cite{shibata94}.
For the  analyticity of the semigroup, we refer to  \cite{liu-renardy95},
 \cite{liu-liu97}, and   \cite{liu-yong98}
in the $L_2$-setting. For the treatment of nonlinear problems, corresponding results in $L_p$ are of relevance.
In the whole-space case, analyticity of the
generated semigroup in $L_p$ was shown in  \cite{denk-racke06}.
In the case of the half-space and of bounded
domains, equations \eqref{1-1} with Dirichlet (clamped) boundary conditions
\[ u = \partial_\nu u = \theta = 0 \quad\text{ on }
(0,\infty)\times\Gamma\]
were studied in  \cite{naito-shibata09} and \cite{naito09}. In the paper \cite{lasiecka-wilke13}, a rather complete analysis in the $L^p$-setting can be found for hinged boundary conditions $u=\Delta u = \theta =0$.

System \eqref{1-1}--\eqref{1-3}, i.e. the thermoelastic plate equations with free boundary conditions in the $L_p$-setting, has been studied recently by the authors in \cite{Denk-Shibata17}. It was shown that the second-order (in time) system \eqref{1-1}--\eqref{1-3} has maximal $L_q$-$L_p$-regularity. However, this does not imply that the first-order system \eqref{1-1aa}--\eqref{1-2aa} generates an analytic $C_0$-semigroup. This was also observed in the case of the structurally damped plate equation with clamped boundary conditions in \cite{denk-schnaubelt15}. In fact, in the situation of \cite{denk-schnaubelt15}, we have maximal regularity, but no generation of semigroup unless additional conditions are included in the basic space. Roughly speaking, this is due to the fact that the standard resolvent estimates hold only for right-hand sides with vanishing first component, and the reformulation of \eqref{1-1} as a first-order system in fact leads to such a right-hand side.

In the present paper, however, we show that the operator related to the first-order system \eqref{1-1a}  generates an analytic $C_0$-semigroup without additional conditions on the basic space $\E_p^{(0)}(\Omega)$. The proofs are based on Fourier multiplier methods on one hand and  on the results from \cite{Denk-Shibata17} on the other hand. If the domain $\Omega$ is bounded, we obtain exponential stability apart from the kernel of the operator. In particular, we obtain generation of an analytic semigroup and exponential stability for the two-dimensional system which was studied in \cite{lasiecka-triggiani98c}, in this way generalizing the results in \cite{lasiecka-triggiani98c} from the $L^2$-case to the $L^p$-case.

\section{The whole space case}

In this section, we consider the whole-space case, i.e. system \eqref{1-1}--\eqref{1-2} with $\Omega=\R^N$. Our approach is based on the Fourier transform and results on vector-valued Fourier multipliers. In particular, the proof of maximal regularity in the sense of well-posedness in $L^q$-$L^p$-Sobolev spaces make use of the concept of $\mathcal R$-boundedness and variants of Michlin's theorem. As standard references, we mention \cite{denk-hieber-pruess03} and \cite{kunstmann-weis04}.

The Fourier transform $\mathscr F$ in $\R^N$ is given by
\[ (\mathscr F\varphi)(\xi) := (2\pi)^{-N/2} \int_{\R^N} \varphi(x)e^{-ix\xi} dx\quad (\xi\in\R^N)\]
for Schwartz functions $\varphi$ and extended by duality to tempered distributions. A symbol $m\in L^\infty(\R^N)$ is called a Fourier multiplier if $\mathscr F^{-1} m\mathscr F$ defines a bounded linear operator in $L^p(\R^N)$.
One of the key ingredients to show $\mathcal R$-sectoriality will be the vector-valued version of Michlin's theorem on Fourier multipliers due to Weis \cite{weis01} and Girardi and Weis \cite{girardi-weis03}.

The following definition is a variant of \cite{Denk-Shibata17}, Definition~3.2.

\begin{definition}
  \label{2.1}
  Let $\Sigma\subset \C$ be a set, let $m\colon (\R^{N}\setminus\{0\})\times \Sigma\to \C, \, (\xi,\lambda)\mapsto m(\xi,\lambda)$, be $C^\infty$ with respect to $\xi$. Let $s\in\R$.  Then $m$ is called a multiplier of order $s$ in $\Sigma$ if the estimates
  \[ |\partial_{\xi}^\alpha m(\xi,\lambda)| \le C_\alpha (|\lambda|^{1/2} + |\xi|)^{s} |\xi|^{-|\alpha|}\]
  hold for any multi-index $\alpha\in\N_0^N$ and $(\xi,\lambda)\in (\R^{N}\setminus\{0\})\times \Sigma$ with some constant $C_\alpha$ depending only on $\alpha$ and $\Sigma$. The set of all multipliers of order $s$  in $\Sigma$ will be denoted by $\mathbb M_s(\Sigma)$.
\end{definition}

It is easily seen that $\mathbb M_s(\Sigma)$ is a complex vector space and that  for $m_1\in \mathbb M_{s_1}(\Sigma)$ and $m_2\in \mathbb M_{s_2}(\Sigma)$ we have $m_1m_2\in \mathbb M_{s_1+s_2}(\Sigma)$ (see \cite{Denk-Shibata17}, Lemma~3.3).

\begin{example}
  \label{2.2} We mention some examples which will be useful below. Let $\theta\in (0,\pi)$, and let
\begin{equation}\label{2-1a}
 \Sigma_\theta := \{ \lambda\in\C\setminus\{0\}: |\arg\lambda|<\theta\}
\end{equation}
be the open sector in the complex plane.

a) Directly from the definition it can be seen that $ \lambda\in \mathbb M_2(\Sigma_\theta)$ (where $\lambda$ stands for the constant mapping $(\xi,\lambda)\mapsto \lambda$) and $|\xi|^{2k}\in M_{2k}(\Sigma_\theta)$ for all $k\in\N$.

b) Let $s\in\R\setminus\{0\}$, and let $m(\xi,\lambda):= (\lambda+|\xi|^2)^{s/2}$. Then $m\in \mathbb M_s(\Sigma_\theta)$. This can be seen by homogeneity: As $m$ is quasi-homogeneous of order $s$ in the sense that
\[ m(\rho\xi,\rho^2 \lambda) = \rho^s m(\xi,\lambda)\quad (\rho>0,\, \xi\in\R^N\setminus\{0\},\, \lambda\in\Sigma_\theta),\]
the derivative $\partial_\xi^\alpha m$ is quasi-homogeneous of degree $s-|\alpha|$. By a compactness argument,
\[ |\partial_\xi^\alpha m(\xi,\lambda)| \le c_\alpha (|\lambda|^{1/2} + |\xi|)^{s-|\alpha|} \le c_\alpha (|\lambda|^{1/2} + |\xi|)^s |\xi|^{-|\alpha|}\]
which shows $m\in\mathbb M_s(\Sigma_\theta)$.

c) By a similar homogeneity argument, we see that $(\xi,\lambda)\mapsto \frac{|\xi|}{(1+|\xi|^2)^{1/2}} \in \mathbb M_0(\Sigma_\theta)$.

d) Let $\lambda_0>0$. Then $(\xi,\lambda)\mapsto 1\in \mathbb M_2(\lambda_0+\Sigma_0)$ due to
\[ 1\le c_{\lambda_0} |\lambda| \le c_{\lambda_0} (|\lambda|^{1/2}+|\xi|)^2.\]
Therefore, $1+|\xi|^2\in \mathbb M_2(\lambda_0+\Sigma_\theta)$. Note that $1\not\in \mathbb M_2(\Sigma_\theta)$.
\end{example}

The following result is one main tool for the results below and was shown in \cite{enomoto-shibata13}, Theorem~3.3.

\begin{lemma}
  \label{2.3}
  In the situation of Definition~\ref{2.1}, let $m\in \mathbb M_0(\Sigma)$. For $\lambda\in\Sigma$, define the operator $m(D,\lambda)$ by $m(D,\lambda) f= \mathscr F_\xi^{-1}[m(\lambda,\xi)\mathscr F_\xi f(\xi)]$. Then, the family $\{m(D,\lambda):\lambda\in\Sigma\}\subset L(L^p(\R^N))$ is $\mathcal R$-bounded and
  \[ \mathcal R_{L(L^p(\R^N))}(\{m(D,\lambda):\lambda\in\Sigma\}) \le C_{p,N} \max_{|\alpha|\le N+1} C_\alpha\]
  with $C_{p,N}$ depending only on $p$ and $N$.
\end{lemma}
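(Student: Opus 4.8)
The plan is to deduce Lemma~\ref{2.3} from the operator-valued Mihlin--H\"ormander theorem (the Weis / Girardi--Weis version quoted above) applied to the parameter-dependent family $\{m(\cdot,\lambda):\lambda\in\Sigma\}$. The point is that $\mathcal R$-boundedness of a family of Fourier multiplier operators follows from an $\mathcal R$-bounded Mihlin condition on the corresponding symbols, and a family of \emph{scalar} symbols that is uniformly bounded in the Mihlin norm is automatically $\mathcal R$-bounded as a family of multiplication operators (here one uses that bounded subsets of $\C$, i.e.\ of $L(\C)$, are trivially $\mathcal R$-bounded, Kahane's contraction principle). So the whole task reduces to checking that the symbols $m(\xi,\lambda)$, for $\lambda\in\Sigma$, satisfy the Mihlin estimates $|\xi|^{|\alpha|}|\partial_\xi^\alpha m(\xi,\lambda)|\le C_\alpha$ with a bound uniform in $\lambda$.

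First I would recall the precise statement we invoke: if $M\colon \R^N\setminus\{0\}\to L(X)$ (here $X=L^p(\R^N)$, but we will actually use it with $X=\C$ on the symbol level and then lift) satisfies $\mathcal R(\{|\xi|^{|\alpha|}\partial_\xi^\alpha M(\xi):\xi\neq 0\})\le \kappa$ for all $|\alpha|\le N+1$ (or all $\alpha\in\{0,1\}^N$, depending on the version; since $L^p$ is a UMD space with property $(\alpha)$ this applies), then $\mathscr F^{-1}M\mathscr F$ is bounded on $L^p(\R^N;X)$ with norm $\le C_{p,N}\kappa$. Second, I would observe that by hypothesis $m\in\mathbb M_0(\Sigma)$ means exactly
\[ |\partial_\xi^\alpha m(\xi,\lambda)|\le C_\alpha (|\lambda|^{1/2}+|\xi|)^0|\xi|^{-|\alpha|}=C_\alpha|\xi|^{-|\alpha|}\qquad(|\alpha|\le N+1,\ \xi\neq 0,\ \lambda\in\Sigma),\]
so the scalar symbols $\xi\mapsto m(\xi,\lambda)$ have Mihlin constants bounded by $\max_{|\alpha|\le N+1}C_\alpha$, \emph{uniformly in} $\lambda\in\Sigma$. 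Third, I would form the single operator-valued symbol on $\ell^\infty(\Sigma)$ or, more carefully, argue via Kahane: for any finite choice $\lambda_1,\dots,\lambda_n\in\Sigma$, the family $\{m(D,\lambda_j)\}_{j=1}^n$ is simultaneously handled by applying the vector-valued Mihlin theorem on $L^p(\R^N;\ell^\infty_n)$ (or $\ell^p_n$) to the diagonal symbol $\xi\mapsto \operatorname{diag}(m(\xi,\lambda_1),\dots,m(\xi,\lambda_n))$, whose $\mathcal R$-bound as a family of diagonal multiplication operators on $\C^n$ is controlled by the same $\max_{|\alpha|\le N+1}C_\alpha$ (again because a bounded family of scalars is $\mathcal R$-bounded with the same constant, up to a universal factor). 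Taking the supremum over finite subsets gives the asserted $\mathcal R$-bound for $\{m(D,\lambda):\lambda\in\Sigma\}\subset L(L^p(\R^N))$.

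The main obstacle — really the only nontrivial point — is the passage from a uniformly Mihlin-bounded \emph{set} of scalar symbols to an \emph{$\mathcal R$-bounded} family of multiplier operators on the vector-valued space, i.e.\ correctly setting up the "diagonal symbol on $\C^n$" trick and invoking the operator-valued Mihlin theorem in the UMD space $L^p(\R^N)$ (which has property $(\alpha)$, so the scalar Mihlin condition suffices and no extra $\mathcal R$-boundedness of derivatives beyond the pointwise scalar bound is needed). Everything else — the reduction of $\mathcal R$-boundedness to finite subsets, the triviality of $\mathcal R$-boundedness for bounded subsets of $\C$, the bookkeeping of the constant $C_{p,N}\max_{|\alpha|\le N+1}C_\alpha$ — is routine. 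Since this is precisely \cite{enomoto-shibata13}, Theorem~3.3, I would keep the argument short and refer there for the details of the multiplier theorem, emphasizing only the verification that the hypothesis $m\in\mathbb M_0(\Sigma)$ yields the required $\lambda$-uniform symbol bounds.
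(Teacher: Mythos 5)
The paper does not actually prove Lemma~\ref{2.3}: it is quoted verbatim from \cite{enomoto-shibata13}, Theorem~3.3, so there is no internal argument to compare against, and your closing move of deferring to that reference is exactly what the authors do. Your sketch of how the cited result is obtained is the standard one, and steps one and two (the operator-valued Mihlin theorem and the observation that $m\in\mathbb M_0(\Sigma)$ gives $\lambda$-uniform scalar Mihlin bounds $|\xi|^{|\alpha|}|\partial_\xi^\alpha m(\xi,\lambda)|\le C_\alpha$) are correct and are indeed the only place where the hypothesis of the lemma enters.

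There is, however, one concrete problem in the step you yourself single out as ``really the only nontrivial point.'' To upgrade uniform boundedness of the scalar multipliers to $\mathcal R$-boundedness of $\{m(D,\lambda)\}$ in $L(L^p(\R^N))$, the auxiliary sequence space must be $\ell^2_n$, not $\ell^\infty_n$ or $\ell^p_n$. By Kahane's inequality and Khintchine's inequality (equivalently, the square-function characterization of $\mathcal R$-boundedness in $L^p$), $\mathcal R$-boundedness of $\{m(D,\lambda_j)\}_{j=1}^n$ is equivalent to the estimate $\|(\sum_j|m(D,\lambda_j)f_j|^2)^{1/2}\|_p\le C\|(\sum_j|f_j|^2)^{1/2}\|_p$, i.e.\ to uniform boundedness of the diagonal multiplier on $L^p(\R^N;\ell^2_n)$. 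With $\ell^p_n$ the diagonal estimate collapses to $\sum_j\|m(D,\lambda_j)f_j\|_p^p\le C\sum_j\|f_j\|_p^p$, which is mere uniform boundedness and says nothing about $\mathcal R$-boundedness for $p\neq 2$; with $\ell^\infty_n$ the UMD (and hence multiplier) constants grow with $n$, so no $n$-uniform bound results. With $\ell^2_n$ everything works: $L(\ell^2_n)$ is a Hilbert-space operator algebra, so the $\mathcal R$-bound of the derivative family $\{|\xi|^{|\alpha|}\partial_\xi^\alpha\operatorname{diag}(m(\xi,\lambda_1),\dots,m(\xi,\lambda_n))\}$ reduces to the sup of the entries, i.e.\ to $\max_{|\alpha|\le N+1}C_\alpha$, and the Hilbert-space-valued Mihlin theorem gives the claimed constant $C_{p,N}\max_{|\alpha|\le N+1}C_\alpha$. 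This is a repairable slip rather than a wrong approach, and the repaired argument is essentially the proof given in \cite{enomoto-shibata13}.
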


The analysis of the operator $A_{p,\R^N}$ in the whole space was essentially done in \cite{naito-shibata09} and \cite{naito09}. We summarize some results from these papers. Define $\gamma_1,\gamma_2,\gamma_3$ by the equality
\begin{equation}
\label{1-1a}
 p(t) := t^3+t^2+2t+1 = (t+\gamma_1)(t+\gamma_2)(t+\gamma_3)
\end{equation}
with $\gamma_1\in\R$, $\gamma_2=\bar\gamma_3$ and $ \Im \gamma_2>0$. Then $\gamma_1\in (0,1)$, $\Re\gamma_2=\Re\gamma_3\in (0,\frac12)$, and $\det(\lambda-A(\xi)) = \prod_{j=1}^3 (\lambda+\gamma_j|\xi|^2)$ (see \cite{naito-shibata09}, Lemma~2.3). We define $\vartheta_0:= \arg(-\gamma_3)\in (\frac\pi 2,\pi)$ (note that $-\gamma_3$ is the root of the polynomial $p$ with positive imaginary part).

We consider the whole space resolvent
\begin{equation}\label{1-4}
 R(\lambda) := (\lambda-A(D))^{-1} := \mathscr F_{\xi}^{-1} (\lambda-A(\xi))^{-1}\mathscr F
\end{equation}
with
\[ A(\xi) := \begin{pmatrix}
  0 & 1 & 0 \\
  -|\xi|^4 & 0 & |\xi|^2\\
  0 & -|\xi|^2 & -|\xi|^2
\end{pmatrix}.\]
For $j\in\{0,1,2\}$, define
\[ S_j(\xi) := (1+|\xi|^2)^{j/2}\diag((1+|\xi|^2), 1,1),\]
 where $\diag(\ldots)$ stands for the diagonal matrix with the corresponding elements on the diagonal.
For the next result, we use the fact that the induced operator
$S_j(D)$ defines an  isometric isomorphism
\begin{equation}
\label{2-1}
  S_j(D)\in L_{\text{Isom}}( \E_p^{(j)} , L^p(\R^N;\C^3)) \quad (j\in\{0,1,2\}).
\end{equation}

\begin{lemma}
  \label{2.4}
  For every $\vartheta<\vartheta_0$, $\lambda_0>0$ and $j\in\{0,1,2\}$ we have
  \[ \mathcal R_{L(\E_p^{(0)}(\R^N), \E_p^{(2-j)}(\R^N))}\big( \big\{ \lambda^{j/2} R(\lambda): \lambda\in\lambda_0+\Sigma_{\vartheta}\big\}\big) <\infty.\]
\end{lemma}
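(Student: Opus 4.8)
The plan is to compute the resolvent $(\lambda - A(\xi))^{-1}$ entrywise via Cramer's rule and show that, after conjugation by the isomorphisms $S_j(D)$, each entry becomes a multiplier of order $0$ in the sense of Definition~\ref{2.1} on the set $\Sigma := \lambda_0 + \Sigma_\vartheta$, so that Lemma~\ref{2.3} applies. First I would recall from \cite{naito-shibata09} that $\det(\lambda - A(\xi)) = \prod_{j=1}^3(\lambda + \gamma_j|\xi|^2)$; since $\vartheta < \vartheta_0 = \arg(-\gamma_3)$, each factor $\lambda + \gamma_j|\xi|^2$ is bounded below by $c(|\lambda| + |\xi|^2) \ge c(|\lambda|^{1/2} + |\xi|)^2$ uniformly for $\lambda \in \lambda_0 + \Sigma_\vartheta$ and $\xi \neq 0$ (the sectorial separation keeps $\gamma_j|\xi|^2$ away from the ray through $-\lambda$). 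Hence $1/\det(\lambda - A(\xi)) \in \mathbb M_{-6}(\lambda_0 + \Sigma_\vartheta)$ by the quasi-homogeneity argument of Example~\ref{2.2}(b), together with the observation in Example~\ref{2.2}(d) that on $\lambda_0 + \Sigma_\vartheta$ one may freely trade $(|\lambda|^{1/2}+|\xi|)$ against its regularized version.

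Next I would examine the cofactor matrix $\mathrm{adj}(\lambda - A(\xi))$. Its entries are polynomials in $\lambda$ and $|\xi|^2$ of the appropriate mixed orders; for instance, using Example~\ref{2.2}(a), $\lambda \in \mathbb M_2$ and $|\xi|^{2k} \in \mathbb M_{2k}$, and on $\lambda_0 + \Sigma_\vartheta$ also the constant $1 \in \mathbb M_2$. A direct inspection of $A(\xi)$ shows that the $(k,\ell)$-entry of $\mathrm{adj}(\lambda - A(\xi))$ lies in $\mathbb M_{2(2+d_k - e_\ell)}$ for suitable shifts, where the weight discrepancy between the first row/column and the others reflects exactly the factors $(1+|\xi|^2)$ built into $S_j$. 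Concretely, I would verify that
\[
S_{2-j}(\xi)^{-1}\,\lambda^{j/2}\,(\lambda - A(\xi))^{-1}\,S_0(\xi)
\]
has all entries in $\mathbb M_0(\lambda_0 + \Sigma_\vartheta)$: the $\lambda^{j/2}$ contributes order $j$ by Example~\ref{2.2}(b), the left factor $S_{2-j}(\xi)^{-1}$ contributes order $-(2-j)$ up to the extra $(1+|\xi|^2)^{-1}$ in its first row, the right factor $S_0(\xi)$ contributes order $0$ up to the extra $(1+|\xi|^2)$ in its first column, and the resolvent entries carry the remaining orders so everything sums to $0$. The algebra-of-multipliers property ($\mathbb M_{s_1}\cdot\mathbb M_{s_2}\subset\mathbb M_{s_1+s_2}$) from \cite{Denk-Shibata17}, Lemma~3.3, makes this bookkeeping routine once the individual orders are identified.

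With this in hand, Lemma~\ref{2.3} gives that $\{S_{2-j}(D)^{-1}\lambda^{j/2}R(\lambda)S_0(D) : \lambda \in \lambda_0 + \Sigma_\vartheta\}$ is $\mathcal R$-bounded in $L(L^p(\R^N;\C^3))$, and then \eqref{2-1} — i.e.\ that $S_j(D)$ is an isometric isomorphism between $\E_p^{(j)}$ and $L^p(\R^N;\C^3)$ — transfers this to the claimed $\mathcal R$-boundedness of $\{\lambda^{j/2}R(\lambda)\}$ in $L(\E_p^{(0)}(\R^N), \E_p^{(2-j)}(\R^N))$, using that $\mathcal R$-boundedness is preserved under composition with fixed bounded operators. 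The main obstacle is the careful order-counting for the mixed-order matrix: one must check that each cofactor entry, weighted by the $S_j$ factors and the power $\lambda^{j/2}$, lands in exactly $\mathbb M_0$ and not in some $\mathbb M_s$ with $s > 0$ (which would fail, as Example~\ref{2.2}(d) warns that $1 \notin \mathbb M_2(\Sigma_\theta)$ without the shift $\lambda_0$). The restriction to $\lambda_0 + \Sigma_\vartheta$ rather than $\Sigma_\vartheta$ is precisely what rescues the lowest-order entries, and keeping track of where that is used is the delicate point; everything else is the quasi-homogeneity estimates of Example~\ref{2.2} and the closure properties of $\mathbb M_s$.
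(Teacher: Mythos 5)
Your proposal follows the paper's proof essentially verbatim: Cramer's rule for $(\lambda-A(\xi))^{-1}$, the factorization of the determinant, the multiplier classes of Definition~\ref{2.1} with the examples from Example~\ref{2.2}, the $\mathcal R$-boundedness transfer via Lemma~\ref{2.3}, and finally the isomorphisms \eqref{2-1}. The one slip is the direction of the conjugation: since $S_k(D)$ maps $\E_p^{(k)}(\R^N)$ isometrically \emph{onto} $L^p(\R^N;\C^3)$, representing $\lambda^{j/2}R(\lambda)\colon \E_p^{(0)}(\R^N)\to\E_p^{(2-j)}(\R^N)$ as an $L^p$-Fourier multiplier requires the symbol $\lambda^{j/2}\,S_{2-j}(\xi)\,(\lambda-A(\xi))^{-1}\,S_0(\xi)^{-1}$, not $S_{2-j}(\xi)^{-1}\,\lambda^{j/2}\,(\lambda-A(\xi))^{-1}\,S_0(\xi)$ as you wrote. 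With your orientation the order count does not close (e.g.\ the $(1,1)$ entry lands in $\mathbb M_{2j-4}$ rather than $\mathbb M_0$), and, more importantly, the final composition $S_{2-j}(D)\cdot[\,\cdot\,]\cdot S_0(D)^{-1}$ does not reproduce the operator $\lambda^{j/2}R(\lambda)$ acting between the stated spaces, so the transfer step would fail as written. Once the $S$-factors are put on the correct sides, your bookkeeping and the rest of the argument coincide with the paper's.
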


\begin{proof}
  Let $j\in\{0,1,2\}$. In view of \eqref{2-1} and Lemma~\ref{2.2}, we have to show that every entry of the matrix
  \[ M^{(j)}(\xi,\lambda) = \big( m^{(j)}_{kl}(\xi,\lambda)\big)_{k,l=1,2,3} := \lambda^{j/2} S_{2-j}(\xi)(\lambda-A(\xi))^{-1} S_0(\xi)^{-1}\]
  belongs to $\mathbb M_0(\lambda_0+\Sigma_{\vartheta})$.
  It was shown in \cite{naito-shibata09}, Section~2, that for all $\lambda\in\lambda_0+\Sigma_{\vartheta}$ we have
  \[ (\lambda-A(\xi))^{-1} = \frac1{\det(\lambda-A(\xi))}
  \begin{pmatrix}
    \lambda (\lambda+|\xi|^2) + |\xi|^4 & \lambda+|\xi|^2 & |\xi|^2\\
    -(\lambda+|\xi|^2)|\xi|^4 & \lambda(\lambda+|\xi|^2) & \lambda|\xi|^2\\
    |\xi|^6 & -\lambda|\xi|^2 & \lambda^2+|\xi|^4
  \end{pmatrix}\,.\]
Moreover, $\det(\lambda-A(\xi)) = \prod_{i=1}^3 (\frac\lambda{\gamma_i}+|\xi|^2)$ (\cite{naito-shibata09}, Lemma~2.3). As in Example~\ref{2.2} b), we see that
\[ (\xi,\lambda)\mapsto (\tfrac\lambda{\gamma_i}+|\xi|^2)^{-1}\in \mathbb M_{-2}(\Sigma_\vartheta)\]
and therefore $(\det(\lambda-A(\xi)))^{-1}\in\mathbb M_{-6}(\Sigma_\vartheta)$. For the left upper corner of $M^{(j)}$, we have
\[ m^{(j)}_{11}(\xi,\lambda) = (\det(\lambda-A(\xi)))^{-1} \lambda^{j/2} (1+|\xi|^2)^{(2-j)/2} (\lambda^2+\lambda|\xi|^2+|\xi|^4).\]
With Example~\ref{2.2} we see that
\begin{align*}
  \lambda^{j/2}& \in\mathbb M_j(\Sigma_\vartheta),\\
  (1+|\xi|^2)^{(2-j)/2} & \in \mathbb M_{2-j}(\lambda_0+\Sigma_\vartheta),\\
  \lambda^2+\lambda|\xi|^2+|\xi|^4 & \in \mathbb M_4(\Sigma_\vartheta),
\end{align*}
which yields $m_{11}^{(j)}\in \mathbb M_0(\lambda_0+\Sigma_\vartheta)$. Similarly,
\[ m_{21}^{(j)}(\xi,\lambda) = - (\det(\lambda-A(\xi)))^{-1} \lambda^{j/2} (1+|\xi|^2)^{(2-j)/2} (\lambda+|\xi|^2) \tfrac{|\xi|^2}{1+|\xi|^2}.\]
Using
\begin{align*}
  \lambda^{j/2} & \in\mathbb M_j(\Sigma_\vartheta),\\
  (1+|\xi|^2)^{(2-j)/2} & \in \mathbb M_{2-j}(\lambda_0+\Sigma_\vartheta),\\
  (\lambda+|\xi|^2) & \in \mathbb M_2(\Sigma_\vartheta),\\
  \tfrac{|\xi|^2}{1+|\xi|^2} & \in \mathbb M_0(\Sigma_\vartheta),
\end{align*}
we obtain $m_{21}\in\mathbb M_0(\lambda_0+\Sigma_\vartheta)$. All other entries of the matrix $M^{(j)}$ can be estimated similarly. Therefore, $M^{(j)}\in \mathbb M_0(\lambda_0+\Sigma_\vartheta)$ which finishes the proof.
 \end{proof}

\begin{corollary}
  \label{2.5} a) For all $\lambda\in\Sigma_{\vartheta_0}$, the operator $\lambda - A_{p,\R^N}\colon \E_p\to \F_p$ is invertible.

  b) The operator $A_{p,\R^N}$ is not sectorial for any angle and therefore does not generate a bounded $C_0$-semigroup on $\F_p$.

  c) For any $\lambda_0>0$, the operator $A_{p,\R^N}-\lambda_0$ is $\mathcal R$-sectorial with $\mathcal R$-angle $\vartheta_0$. Therefore, $A_{p,\R^N}-\lambda_0$ has maximal $L^q$-$L^p$-regularity in $(0,\infty)$, and $A_{p,\R^N}$ has maximal $L^q$-$L^p$-regularity in $(0,T)$ with $T<\infty$. In particular, $A_{p,\R^N}$ generates an analytic $C_0$-semigroup.
\end{corollary}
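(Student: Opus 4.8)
All three parts can be obtained from Lemma~\ref{2.4} and the explicit formula for $(\lambda-A(\xi))^{-1}$ recorded in its proof, and I would organise the argument in three steps. For a), the plan is to show that for each fixed $\lambda\in\Sigma_{\vartheta_0}$ the operator $R(\lambda)$ from \eqref{1-4} maps $\E_p^{(0)}(\R^N)$ boundedly into $\E_p^{(2)}(\R^N)$; on the symbol level $R(\lambda)$ is then a two‑sided inverse of $\lambda-A_{p,\R^N}$, which gives invertibility. The only thing to check is that $\det(\lambda-A(\xi))=\prod_{j=1}^{3}(\lambda+\gamma_j|\xi|^2)\ne0$ for every $\xi\in\R^N$: since $\gamma_1\in(0,1)$ is positive, $\{-\gamma_1|\xi|^2:\xi\in\R^N\}$ is the half‑axis $(-\infty,0]$, while $\{-\gamma_2|\xi|^2\}$ and $\{-\gamma_3|\xi|^2\}$ are the rays $\{re^{\mp i\vartheta_0}:r\ge0\}$, none of which meets the open sector $\Sigma_{\vartheta_0}$ (at $\xi=0$ the determinant is $\lambda^3\ne0$). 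Granting this, the matrix symbol $S_2(\xi)(\lambda-A(\xi))^{-1}S_0(\xi)^{-1}$ is smooth in $\xi$ and satisfies the Mikhlin estimates by exactly the computation in the proof of Lemma~\ref{2.4} for $j=0$, now for a fixed $\lambda$ rather than uniformly in $\lambda$; together with \eqref{2-1} this yields the boundedness.

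For b), reading off the zeros of $\det(\lambda-A(\xi))$ as above one obtains $\sigma(A_{p,\R^N})=(-\infty,0]\cup\{re^{\pm i\vartheta_0}:r\ge0\}$. Since this set contains the negative real half‑axis, it lies in no closed sector $\overline{\Sigma_\omega}$ with $\omega<\pi$, so $A_{p,\R^N}$ cannot be sectorial of any angle. To exclude a bounded $C_0$‑semigroup I would examine the resolvent near $\lambda=0$: the $(1,2)$‑entry of $(\lambda-A(\xi))^{-1}$ is $(\lambda+|\xi|^2)/\det(\lambda-A(\xi))$, so the block of $R(\lambda)$ sending the second component of $\E_p^{(0)}(\R^N)$ into the first component $H_p^2(\R^N)$ is the Fourier multiplier with symbol $(1+|\xi|^2)(\lambda+|\xi|^2)/\det(\lambda-A(\xi))$. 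For $\lambda=\epsilon>0$ this symbol tends to $\epsilon^{-2}$ as $|\xi|\to0$, so its supremum norm — and hence, the $L^p$‑multiplier norm being no smaller than the $L^2$‑multiplier norm, the operator norm of this block — is at least $\epsilon^{-2}$. Thus $\|\epsilon(\epsilon-A_{p,\R^N})^{-1}\|_{L(\E_p^{(0)}(\R^N))}\ge\epsilon^{-1}\to\infty$ as $\epsilon\downarrow0$, which contradicts the bound $\|\epsilon(\epsilon-A)^{-1}\|\le M$ valid for the generator $A$ of any bounded $C_0$‑semigroup.

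For c), Lemma~\ref{2.4} with $j=0$ gives $\mathcal R_{L(\E_p^{(0)}(\R^N),\E_p^{(2)}(\R^N))}(\{R(\lambda):\lambda\in\lambda_0+\Sigma_\vartheta\})<\infty$ for each $\vartheta<\vartheta_0$. Since $A(D)$ is bounded from $\E_p^{(2)}(\R^N)$ to $\E_p^{(0)}(\R^N)$ and $\lambda R(\lambda)=\id+A(D)R(\lambda)$, both $\{R(\lambda)\}$ and $\{\lambda R(\lambda)\}$ are $\mathcal R$‑bounded in $L(\E_p^{(0)}(\R^N))$; writing $\mu=\lambda-\lambda_0$ and $\mu(\mu-(A_{p,\R^N}-\lambda_0))^{-1}=\lambda R(\lambda)-\lambda_0R(\lambda)$, the family $\{\mu(\mu-(A_{p,\R^N}-\lambda_0))^{-1}:\mu\in\Sigma_\vartheta\}$ is $\mathcal R$‑bounded in $L(\E_p^{(0)}(\R^N))$. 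As $\vartheta<\vartheta_0$ is arbitrary, $A_{p,\R^N}-\lambda_0$ is $\mathcal R$‑sectorial with $\mathcal R$‑angle $\vartheta_0$; since $\vartheta_0>\pi/2$, the theorem of Weis \cite{weis01} yields maximal $L^q$‑$L^p$‑regularity of $A_{p,\R^N}-\lambda_0$ on $(0,\infty)$ and, in particular, that $-(A_{p,\R^N}-\lambda_0)$ generates a bounded analytic semigroup. Adding the bounded operator $\lambda_0\,\id$ then shows that $A_{p,\R^N}$ generates an analytic $C_0$‑semigroup and, being a bounded perturbation of an operator with maximal regularity on $(0,\infty)$, has maximal $L^q$‑$L^p$‑regularity on every finite interval $(0,T)$.

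Parts a) and c) are essentially bookkeeping built on Lemma~\ref{2.4} and standard semigroup theory; the one genuine point will be b). There the task is to recognise that the obstruction sits at $\lambda=0$ and to capture it through the off‑diagonal entry $(\lambda+|\xi|^2)/\det(\lambda-A(\xi))$ of the resolvent symbol, which acquires an uncompensated factor $1+|\xi|^2$ as soon as its output is measured in the $H_p^2$‑component of $\E_p^{(0)}$. This is exactly the mechanism mentioned in the introduction — resolvent estimates hold only for right‑hand sides with vanishing first component — and it is what makes the passage to the first‑order reformulation delicate.
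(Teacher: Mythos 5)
Your overall strategy coincides with the paper's: parts a) and c) are read off from Lemma~\ref{2.4} (with $j=0$ resp.\ $j=2$; your detour via $\lambda R(\lambda)=\id+A(D)R(\lambda)$ is an equivalent substitute for the $j=2$ case), and part b) is proved by exhibiting an entry of the rescaled resolvent symbol that blows up as $\lambda$ and $\xi$ tend to $0$. The paper uses the $(1,3)$-entry $\lambda(1+|\xi|^2)|\xi|^2/\prod_j(\lambda+\gamma_j|\xi|^2)$ along $\lambda=k^{-2}$, $|\xi|=k^{-1}$, whereas you use the $(1,2)$-entry with $\lambda=\epsilon$ fixed and $\xi\to 0$; both are valid, and your lower bound via the $L^\infty$-norm of the symbol is the same device the paper invokes through \cite{denk-hieber-pruess03}, Prop.~3.17.

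Two points should be corrected. First, in b) your opening spectral argument does not prove non-sectoriality in the sense the paper uses: here sectoriality of $A_{p,\R^N}$ would mean that some right sector $\Sigma_\vartheta$ lies in the resolvent set together with the uniform bound $\|\lambda(\lambda-A_{p,\R^N})^{-1}\|\le C$ on it, and the set $(-\infty,0]\cup\{re^{\pm i\vartheta_0}:r\ge 0\}$ is disjoint from every such sector, so its presence in the spectrum is no obstruction at all. The genuine obstruction is precisely the failure of the uniform resolvent bound on $(0,\infty)$, which your second (blow-up) argument establishes; that argument should therefore carry both conclusions of b), and the spectral paragraph should be dropped. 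Second, in c) there is a sign slip: under the paper's convention it is $A_{p,\R^N}-\lambda_0$ itself, not its negative, that generates the bounded analytic semigroup, and the passage to $A_{p,\R^N}$ is via $e^{tA_{p,\R^N}}=e^{\lambda_0 t}e^{t(A_{p,\R^N}-\lambda_0)}$ rather than by adding $\lambda_0\id$ to $-(A_{p,\R^N}-\lambda_0)$. Both are local repairs; the substance of the proof is correct and matches the paper's.
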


\begin{proof}
a) Let $\lambda\in\Sigma_{\vartheta_0}$ and choose $\vartheta<\vartheta_0$ and $\lambda_0>0$ such that $\lambda\in \lambda_0+\Sigma_\vartheta$. By Lemma~\ref{2.4} with $j=0$, we have $R(\lambda)\in L(\E_p^{(0)}, \E_p^{(2)})$. Obviously, $R(\lambda)$ is the inverse of $\lambda-A_{p,\R^N}$, and therefore $\lambda$ is in the resolvent set of $A_{p,\R^N}$.

b) Assume that $\|\lambda(\lambda-A_{p,\R^N})^{-1} \|_{L(\F_p)} \le C\;(\lambda\in (0,\infty))$ holds. Then the operator $M_0(D,\lambda)\in L(L^p(\R^N;\C^3))$ is uniformly (with respect to $\lambda$) bounded, where $M_0(\xi,\lambda) := \lambda S_1(\xi) (\lambda-A(\xi))^{-1} S_1(\xi)^{-1}$. In particular, every entry of $M_0(\xi,\lambda)$ is an $L^\infty$-function (see Prop.~3.17 in \cite{denk-hieber-pruess03}). For the last entry in the first row of $M_0(\xi,\lambda)$, we obtain
\[ \left| \frac{\lambda(1+|\xi|^2)|\xi|^2}{\prod_{j=1}^3 (\lambda+\gamma_j|\xi|^2)}\right| \le C<\infty \quad (\lambda\in (0,\infty),\, \xi\in\R^N).\]
However, setting $\lambda=k^{-2}$ and $|\xi|=k^{-1}$, we see that the left-hand side is unbounded for $k\to\infty$.

c) The $\mathcal R$-sectoriality  follows from Lemma~\ref{2.4} with $j=2$, and the other statements are consequences of the general theory on $\mathcal R$-sectorial operators.
\end{proof}

\section{The case of a uniform $C^4$-domain}

\begin{definition}
  \label{1.1}
A domain $\Omega$ is called a uniform $C^4$-domain if
there exist positive constants $\alpha$, $\beta$ and $K$
such that for any $x_0 \in \Gamma$ there exist a coordinate
number $j$ and a $C^4$-function
$h(x')$
defined on $B'_\alpha(x'_0)$ such that
$\|h\|_{H^4_\infty(B'_\alpha(x_0'))} \leq K$ and
\begin{align*}
\Omega\cap B_\beta(x_0) & = \{x \in \R^N \mid
x_j > h(x') \enskip(x' \in B'_\alpha(x'_0))\}
\cap B_\beta(x_0), \\
\Gamma\cap B_\beta(x_0) & = \{x \in \R^N \mid
x_j = h(x') \enskip(x' \in B'_\alpha(x'_0))\}
\cap B_\beta(x_0).
\end{align*}
Here, $x'$ has been defined by
$x' = (x_1, \ldots, x_{j-1}, x_{j+1}, \ldots, x_{_N})$
for $x=(x_1,\ldots, x_{_N})$,
$$B'_\alpha(x'_0)
= \{x' \in \R^{N-1} \mid |x'-x'_0| < \alpha\},
\quad B_\beta(x_0) = \{x \in \R^N \mid |x-x_0| < \beta\}.
$$
\end{definition}

Let $\Omega\subset\R^N$ be a uniform $C^4$-domain with boundary $\Gamma$, and let $p\in(1,\infty)$. To show that the operator $A_p\colon \E_p^{(0)}(\Omega)\supset D(A_p)\to \E_p^{(0)}(\Omega)$ generates an analytic $C_0$-semigroup, we first consider the boundary value problem
\begin{equation}
  \label{3-1}
  \begin{aligned}
    (\lambda-A(D))U & = 0\quad\text{ in }\Omega,\\
    B(D)U & = G\quad\text{ on }\Gamma.
  \end{aligned}
\end{equation}
Here, $G=(g_1,g_2,g_3)^\top$ is defined in the whole of $\Omega$. Similarly to \cite{Denk-Shibata17}, (1.8), we define the spaces
\begin{align*}
  \mathbb G_p(\Omega) & := H_p^2(\Omega)\times H_p^1(\Omega)\times H_p^1(\Omega),\\
  \mathbb X_p(\Omega) & := \mathbb G_p(\Omega)\times\big( H_p^1(\Omega)\times L^p(\Omega)\times L^p(\Omega)\big)\times L^p(\Omega).
\end{align*}
The $\lambda$-dependent map $H(\lambda)\colon \mathbb G_p(\Omega)\to \mathbb X_p(\Omega)$ is defined by
\[ H(\lambda)((g_1,g_2,g_3)^\top) := \big((g_1,g_2,g_3),\lambda^{1/2} (g_1,g_2,g_3), \lambda g_1\big)^\top.\]
The following result on the existence of $\mathcal R$-bounded solution operators was shown in \cite{Denk-Shibata17}, Theorem~1.4.

\begin{theorem}
  \label{3.1}
  There exist a number $\vartheta_1\in (\frac\pi2,\pi)$, a positive number $\lambda_0$ and an operator family
  \[ L(\lambda) = \big(L_1(\lambda),\lambda L_1(\lambda), L_2(\lambda)\big)^\top \in L(\mathbb X_p(\Omega), \E_p^{(2)}(\Omega))\]
  such that for every $\lambda\in\lambda_0+\Sigma_{\vartheta_1}$ and every $G\in\mathbb G_p(\Omega)$, problem \eqref{3-1} admits a unique solution $U\in \E_p^{(2)}(\Omega)$ given by $U = L(\lambda) H(\lambda)G$. Moreover,
  \begin{align*}
  \mathcal R_{L(\mathbb X_p(\Omega), H_p^{4-j}(\Omega))} \big(\big\{
  \lambda^{j/2} L_1(\lambda)\colon \lambda\in \lambda_0+\Sigma_{\vartheta_1}\big\} \big) \le C\quad (j=0,\dots,4),\\
  \mathcal R_{L(\mathbb X_p(\Omega), H_p^{2-j}(\Omega))} \big(\big\{
  \lambda^{j/2} L_2(\lambda)\colon \lambda\in \lambda_0+\Sigma_{\vartheta_1}\big\} \big) \le C\quad (j=0,1,2).
  \end{align*}
\end{theorem}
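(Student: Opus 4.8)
The plan is to prove Theorem~\ref{3.1} by the usual localization-plus-perturbation scheme for boundary value problems of Douglis--Nirenberg type, reducing \eqref{3-1} on the uniform $C^4$-domain $\Omega$ to a constant-coefficient half-space model problem and patching the resulting solution operators by a partition of unity subordinate to the covering of $\Gamma$ from Definition~\ref{1.1}. As throughout this circle of ideas, the $\mathcal R$-boundedness is extracted from the operator-valued Fourier multiplier theorem together with the calculus of the classes $\mathbb M_s(\Sigma)$ and Lemma~\ref{2.3}.

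\emph{Step 1: the half-space model problem.} First I would solve
\[(\lambda-A(D))U=0\ \text{ in }\R^N_+,\qquad B_0(D)U=G\ \text{ on }\partial\R^N_+,\]
where $B_0(D)$ is obtained from $B(D)$ by taking $\Omega=\R^N_+$, $\nu=-e_N$ and $\Delta'=\sum_{j<N}\partial_j^2$. Applying the partial Fourier transform $\mathscr F'$ in $x'$ turns this, for each fixed $(\xi',\lambda)$, into a system of ordinary differential equations in $x_N>0$; by \eqref{1-1a} and the factorization of $\det(\lambda-A(\xi))$ recalled in Lemma~\ref{2.4}, its characteristic roots $\mu$ satisfy $\mu^2=\lambda/\gamma_j+|\xi'|^2$, so the bounded solutions are spanned by $e^{-B_j(\xi',\lambda)x_N}$, $j=1,2,3$, with $B_j:=(\lambda/\gamma_j+|\xi'|^2)^{1/2}$ chosen so that $\Re B_j>0$. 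Writing $(\mathscr F'U)(\xi',x_N)=\sum_{j=1}^3 c_j(\xi',\lambda)w_j(\xi',\lambda)e^{-B_jx_N}$ with $w_j$ spanning the corresponding kernel and inserting this into the three boundary conditions produces a $3\times3$ linear system $\mathcal L(\xi',\lambda)c=(\mathscr F'G)(\xi',0)$. The decisive point is the Lopatinskii--Shapiro (complementing) condition: I would show $\det\mathcal L(\xi',\lambda)\neq0$ for all $(\xi',\lambda)\in(\R^{N-1}\setminus\{0\})\times\Sigma_{\vartheta_1}$ together with a two-sided estimate $|\det\mathcal L(\xi',\lambda)|\sim(|\lambda|^{1/2}+|\xi'|)^{m}$ for the relevant exponent $m$; by the quasi-homogeneity of the entries and a compactness argument as in Example~\ref{2.2}~b), this reduces to the non-vanishing of $\det\mathcal L$ on $\{(\xi',\lambda):|\lambda|^{1/2}+|\xi'|=1,\ |\arg\lambda|\le\vartheta_1\}$, which is checked by an explicit computation with the free boundary conditions \eqref{1-3}. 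Solving $c=\mathcal L(\xi',\lambda)^{-1}(\mathscr F'G)(\xi',0)$, I would then replace the trace $(\mathscr F'G)(\xi',0)$ by $-\int_0^\infty\partial_{y_N}(\mathscr F'G)(\xi',y_N)\,dy_N$ (Volevich's trick) and, after distributing the $y_N$-derivative and the powers of $\lambda$, read off solution operators whose symbols, multiplied by the derivatives and $\lambda$-powers indicated in the theorem, lie in $\mathbb M_0(\lambda_0+\Sigma_{\vartheta_1})$; this is exactly why the auxiliary map $H(\lambda)$ has the form it has, the three slots of $H(\lambda)G$ recording the three ways $G$ reappears after the integration by parts. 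The $\mathcal R$-bounds then follow from the $H^s$-valued Fourier multiplier theorem in the tangential variables combined with the $x_N$-integration, exactly as for the whole-space operator in Lemma~\ref{2.4}.

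\emph{Step 2: bent half-space and localization.} Flattening $\Gamma\cap B_\beta(x_0)$ by the coordinate change attached to the graph function $h$ of Definition~\ref{1.1} turns $A(D)$ and $B(D)$ into variable-coefficient operators whose difference from the constant-coefficient model is governed by the oscillation of $\nabla h$ and hence is small in the relevant norms once we work on sufficiently small charts (of a radius chosen in terms of $K$). Since the half-space solution operator of Step~1 gains the full regularity $\E_p^{(2)}$ from data in $\mathbb X_p$, these perturbations enter as $\mathcal R$-bounded operators of small $\mathcal R$-norm, and a Neumann series in the algebra of $\mathcal R$-bounded operators (after enlarging $\lambda_0$) yields a solution operator for the bent half-space with constants depending only on $\alpha,\beta,K$. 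Taking a locally finite covering $\{B_\beta(x_0^k)\}_{k\in\N}$ of $\Gamma$ with subordinate cut-offs $\varphi_k,\psi_k$ such that $\psi_k\equiv1$ on $\operatorname{supp}\varphi_k$, plus one interior cut-off whose contribution is handled by the whole-space resolvent of Lemma~\ref{2.4}, I would set $U:=\sum_k\varphi_kU_k$ with $U_k$ the solution of the $k$-th bent-half-space model problem with localized data $\psi_kG$. Applying $(\lambda-A(D))$ and $B(D)$ returns $G$ on $\Gamma$ plus the commutators $[A(D),\varphi_k]U_k$ and $[B(D),\varphi_k]U_k$, which involve fewer derivatives of $U_k$ than the leading terms and hence carry a negative power of $(|\lambda|^{1/2}+|\xi|)$; their total $\mathcal R$-norm therefore drops below $\tfrac12$ once $\lambda_0$ is large enough, and a final Neumann series turns the parametrix into the exact solution operator $L(\lambda)H(\lambda)$ with the asserted $\mathcal R$-bounds. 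Uniqueness follows from the a priori estimate contained in these bounds, applied to $G=0$.

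\emph{Main obstacle.} The hardest step should be the Lopatinskii--Shapiro verification for the free boundary conditions \eqref{1-3}: unlike the clamped or hinged cases, the boundary symbol $\mathcal L(\xi',\lambda)$ couples the fourth-order plate operator, the second-order heat operator and the tangential Laplace--Beltrami term, so proving $\det\mathcal L\neq0$ on the critical sphere --- with constants uniform in $\arg\lambda$ --- demands a careful, essentially explicit algebraic computation. A secondary difficulty, more of a bookkeeping nature, is to organize the localization so that all constants, and in particular the final $\lambda_0$, depend only on the geometric data $\alpha,\beta,K$, so that the half-space, bounded and exterior domains are all covered at once.
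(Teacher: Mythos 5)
You should first be aware that this paper does not prove Theorem~\ref{3.1} at all: it is imported verbatim from the authors' earlier work (\cite{Denk-Shibata17}, Theorem~1.4), so there is no in-paper argument to compare against. That said, your outline is the standard architecture for such results and is essentially the one followed in the cited reference: partial Fourier transform in the tangential variables, explicit representation of the stable solutions via $e^{-B_j x_N}$ with $B_j=(\lambda/\gamma_j+|\xi'|^2)^{1/2}$, inversion of the Lopatinskii matrix, Volevich's trick to convert boundary traces into volume integrals (which is indeed the reason $H(\lambda)$ carries the three slots $G$, $\lambda^{1/2}G$, $\lambda g_1$), symbol estimates in the classes $\mathbb M_s$, and localization with Neumann-series corrections. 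Your identification of where the angle restriction enters (one needs $\Re B_j>0$, which fails beyond $\vartheta_0=\arg(-\gamma_3)$ because $\gamma_2,\gamma_3$ are non-real) is also consistent with the setup in Section~2.

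The genuine gap is that the mathematical core of the theorem --- the verification that $\det\mathcal L(\xi',\lambda)\neq 0$ on the critical compact set, with a two-sided quasi-homogeneous bound, for the \emph{free} boundary conditions \eqref{1-3} --- is only announced, not carried out. You correctly flag it as the main obstacle, but everything else in your argument is routine machinery that works for any boundary condition satisfying the complementing condition; the entire content of the theorem, and the reason it required a separate paper, is precisely this determinant computation, which couples the three roots $B_1,B_2,B_3$ through the fourth-order plate symbol, the tangential term $(1-\beta)|\xi'|^2$, and the Neumann condition for $\theta$. In \cite{Denk-Shibata17} this occupies a substantial explicit calculation (including a careful treatment of the degenerate directions $\xi'=0$ and $\lambda\to 0$ on the unit quasi-sphere), and without it your proposal establishes nothing. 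A secondary, smaller omission: in Step~1 you solve the model problem with $G$ prescribed only on the boundary, but the theorem's formulation requires $G$ defined on all of $\Omega$ and the operator $L(\lambda)$ acting on $\mathbb X_p(\Omega)$; reconciling these (i.e., showing the solution operator factors through $H(\lambda)$ applied to the interior extension, uniformly under localization) needs to be made explicit rather than left implicit in the phrase ``read off solution operators.''
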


Let $R_\Omega\colon f\mapsto f|_\Omega$ denote the restriction of a function defined on $\R^N$ to $\Omega$. Obviously, we have $r_\Omega\in L(H_p^i(\R^N), H_p^i(\Omega))$ with norm 1 for any $i\in\N_0$. In fact, $r_\Omega$ is a retraction as a corresponding co-retraction (extension operator) exists for uniform $C^4$-domains. In the following, we fix an extension operator $e_\Omega\colon L^1_{\text{loc}}(\Omega)\to L^1_{\text{loc}}(\R^N)$ with the property that for any $p\in (1,\infty)$ and $f\in H_p^i(\Omega)$, we have $e_\Omega f\in H_p^i(\R^N)$, $r_\Omega e_\Omega f=f$, and $ \|e_\Omega\|_{L(H_p^i(\Omega), H_p^i(\R^N))} \le C_p$ for $i=0,\dots,4$. For the existence of such an extension operator, we refer to \cite{schade-shibata15}, Appendix~A.

The following theorem is the main result of the present paper.

\begin{theorem}
  \label{3.2}
  There exist $\lambda_0>0$ and $\vartheta>\frac\pi 2$ such that the operator $A_{p,\Omega}-\lambda_0$ is $\mathcal R$-sectorial with $\mathcal R$-angle $\vartheta$. Therefore, $A_{p,\Omega}$ has maximal $L^q$-$L^p$-regularity in every finite time interval. In particular, $A_{p,\Omega}$ generates an analytic $C_0$-semigroup in $\E_p^{(0)}(\Omega)$.
\end{theorem}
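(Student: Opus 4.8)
The plan is to reduce the resolvent problem for $A_{p,\Omega}-\lambda_0$ on the domain $\Omega$ to the whole-space resolvent (Corollary~\ref{2.5}) plus a correction term that solves the boundary value problem \eqref{3-1}, for which we have $\mathcal{R}$-bounded solution operators by Theorem~\ref{3.1}. Concretely, given $F=(f_1,f_2,f_3)^\top\in\E_p^{(0)}(\Omega)$ we first extend it to $e_\Omega F\in \E_p^{(0)}(\R^N)$ and set $U_1(\lambda):=r_\Omega R(\lambda) e_\Omega F$, which by Lemma~\ref{2.4} (applied with $j=2$) gives an $\mathcal{R}$-bounded family of operators $\E_p^{(0)}(\Omega)\to\E_p^{(2-j)}(\Omega)$ for $\lambda\in\lambda_0+\Sigma_\vartheta$, after shrinking the sector to $\vartheta<\vartheta_0$ and enlarging $\lambda_0$. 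Then $(\lambda-A(D))U_1=F$ in $\Omega$, but $B(D)U_1=:G_1$ need not vanish on $\Gamma$. We then set $U_2(\lambda):= L(\lambda)H(\lambda)G_1$, where $G_1$ is understood as an element of $\mathbb{G}_p(\Omega)$, so that $(\lambda-A(D))U_2=0$ in $\Omega$ and $B(D)U_2=G_1$ on $\Gamma$; the full solution is $U:=U_1-U_2$, which satisfies $(\lambda-A_{p,\Omega})U=F$.

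The key point for $\mathcal{R}$-sectoriality with angle $>\tfrac\pi2$ is that the composition $\lambda^{j/2}L(\lambda)H(\lambda)\,B(D)\,r_\Omega R(\lambda)e_\Omega$ is $\mathcal{R}$-bounded as a family of operators $\E_p^{(0)}(\Omega)\to\E_p^{(2-j)}(\Omega)$ for $j=0,1,2$ on $\lambda_0+\Sigma_\vartheta$ with $\vartheta:=\min\{\vartheta_0,\vartheta_1\}>\tfrac\pi2$. Here one must track the $\lambda$-powers carefully: the map $H(\lambda)$ produces the components $(G_1,\lambda^{1/2}G_1,\lambda G_1)$, and the estimates in Theorem~\ref{3.1} are stated for $L(\lambda)$ acting on $\mathbb{X}_p(\Omega)$ with those weights built in, so the output $\lambda^{j/2}L_1(\lambda)$ lands in $H_p^{4-j}(\Omega)$ (hence the first two components of $\E_p^{(2-j)}$) and $\lambda^{j/2}L_2(\lambda)$ in $H_p^{2-j}(\Omega)$ (the third component), matching $\E_p^{(2-j)}(\Omega)$ exactly. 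It remains to check that $G_1=B(D)r_\Omega R(\lambda)e_\Omega F$ together with $\lambda^{1/2}G_1$ and $\lambda G_1$ define an $\mathcal{R}$-bounded family $\E_p^{(0)}(\Omega)\to\mathbb{X}_p(\Omega)$. Since $B(D)$ involves $\Delta$, $\Delta'$ (tangential, bounded by $\Delta$ plus lower order in $H_p^2$), $\partial_\nu\Delta$, $\partial_\nu\Delta'$ and $\partial_\nu$, the operator $B(D)$ maps $\E_p^{(2)}(\Omega)$ boundedly into $\mathbb{G}_p(\Omega)=H_p^2(\Omega)\times H_p^1(\Omega)\times H_p^1(\Omega)$ (the $C^4$-regularity of $\Gamma$ is used here to control $\Delta'$); and $r_\Omega R(\lambda)e_\Omega$ is $\mathcal{R}$-bounded into $\E_p^{(2)}(\Omega)$, so the composition $B(D)r_\Omega R(\lambda)e_\Omega$ is $\mathcal{R}$-bounded into $\mathbb{G}_p(\Omega)$. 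For the weighted components $\lambda^{1/2}G_1$ into $H_p^1$-type spaces and $\lambda G_1$ into $L^p$ we instead use Lemma~\ref{2.4} with $j=1$ and $j=2$ respectively to absorb the $\lambda$-powers against lost derivatives: $\lambda^{1/2}B(D)r_\Omega R(\lambda)e_\Omega$ is $\mathcal{R}$-bounded $\E_p^{(0)}(\Omega)\to H_p^1(\Omega)\times L^p(\Omega)\times L^p(\Omega)$ and $\lambda B(D)r_\Omega R(\lambda)e_\Omega$ into $L^p(\Omega)$ (for the latter only the first component $g_1$, i.e. the top row of $B(D)$, is needed, and it reads $\Delta u-(1-\beta)\Delta'u+\theta$, which is one order below $\E_p^{(2)}$ in the first and third slots). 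Composing, $\{\lambda^{j/2}U_2(\lambda):\lambda\in\lambda_0+\Sigma_\vartheta\}$ is $\mathcal{R}$-bounded $\E_p^{(0)}(\Omega)\to\E_p^{(2-j)}(\Omega)$, and together with the $U_1$-part this gives $\mathcal{R}_{L(\E_p^{(0)}(\Omega),\E_p^{(2-j)}(\Omega))}(\{\lambda^{j/2}(\lambda-A_{p,\Omega})^{-1}:\lambda\in\lambda_0+\Sigma_\vartheta\})<\infty$.

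From the case $j=2$ one reads off that $\{\lambda(\lambda-(A_{p,\Omega}-\lambda_0))^{-1}:\lambda\in\Sigma_\vartheta\}$ is $\mathcal{R}$-bounded in $L(\E_p^{(0)}(\Omega))$, i.e. $A_{p,\Omega}-\lambda_0$ is $\mathcal{R}$-sectorial of $\mathcal{R}$-angle $\vartheta>\tfrac\pi2$. (Invertibility of $\lambda-(A_{p,\Omega}-\lambda_0)$ on the sector and injectivity — hence that $U=U_1-U_2$ is the unique solution — follow from the uniqueness in Theorem~\ref{3.1} and a standard argument.) Maximal $L^q$-$L^p$-regularity of $A_{p,\Omega}-\lambda_0$ on $(0,\infty)$ then follows from the Weis theorem characterizing maximal regularity via $\mathcal{R}$-sectoriality, and a shift argument transfers this to maximal regularity of $A_{p,\Omega}$ on any finite interval $(0,T)$; analyticity of the semigroup generated by $A_{p,\Omega}$ is immediate since $\mathcal{R}$-sectoriality of angle $>\tfrac\pi2$ implies sectoriality of angle $>\tfrac\pi2$.

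The main obstacle I anticipate is the bookkeeping of the $\lambda$-weights: one must verify that the weights built into $H(\lambda)$ and into the estimates of $L(\lambda)$ in Theorem~\ref{3.1} combine with the $\lambda$-gains available from Lemma~\ref{2.4} (one half-power per derivative lost) so that each of the three rows of $B(D)$ — which have \emph{different orders}, namely $2,3,1$ in the $u$-slot — produces data in exactly the right component of $\mathbb{X}_p(\Omega)$ with the correct power $\lambda^{j/2}$ for $j=0,1,2$. A secondary technical point is that the whole-space resolvent estimates of Lemma~\ref{2.4} are stated on $\R^N$, so one must check that conjugating by the retraction/co-retraction pair $(r_\Omega,e_\Omega)$ (whose norms are uniformly bounded on $H_p^i$ for $i=0,\dots,4$ by the choice of extension operator) preserves $\mathcal{R}$-boundedness, which is a standard but necessary remark. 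Once these are in place, the rest is a routine composition of $\mathcal{R}$-bounded families and an appeal to the general theory of $\mathcal{R}$-sectorial operators.
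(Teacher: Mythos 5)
Your proposal is correct and follows essentially the same route as the paper: extend $F$ to $\R^N$, apply the whole-space resolvent $r_\Omega R(\lambda)e_\Omega$, correct the boundary values via $L(\lambda)H(\lambda)B(D)r_\Omega R(\lambda)e_\Omega$, and obtain the $\mathcal R$-bounds by splitting $H(\lambda)B(D)$ according to the orders of the rows of $B(D)$ and pairing the components $(G,\lambda^{1/2}G,\lambda G)$ with $R(\lambda),\lambda^{1/2}R(\lambda),\lambda R(\lambda)$ mapping into $\E_p^{(2)},\E_p^{(1)},\E_p^{(0)}$ respectively. This is exactly the decomposition and bookkeeping in the paper's proof, including the observation that only the first row $B_1(D)$ is needed at the $\lambda G$ level.
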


\begin{proof}
  We first obtain a description of the resolvent $(\lambda-A_{p,\Omega})^{-1}$. For this, let $F\in \E_p^{(0)}(\Omega)$ be given. We apply the extension operator $e_\Omega$ from above to every component of $F$ and obtain $e_\Omega F\in \E_p^{(0)}(\R^N)$. We set $U_1 := r_\Omega R(\lambda)e_\Omega F$ for $\lambda\in\Sigma_{\vartheta_0}$ with $R(\lambda)$ being the whole space resolvent defined in \eqref{1-4}.

  To solve
  \begin{equation}
  \label{3-2}
  \begin{aligned}
    (\lambda - A(D)) U & = F \quad\text{ in } \Omega,\\
    B(D) U & = 0 \quad\text{ on }\Gamma,
  \end{aligned}
  \end{equation}
  we set $U=U_1+U_2$ and obtain the boundary value problem
  \begin{align*}
    (\lambda - A(D)) U_2 & = 0 \quad \text{ in }\Omega,\\
    B(D) U_2 & = - B(D) U_1 \quad\text{ on }\Gamma
  \end{align*}
  for $U_2$. Due to Theorem~\ref{3.1}, there exist $\lambda_0$ and $\vartheta_1$ such this equation is uniquely solvable for $\lambda\in \lambda_0+\Sigma_{\vartheta_1}$, and its solution is given by
  \[ U_2 = - L(\lambda) H(\lambda) B(D) U_1.\]
  Therefore, for $\vartheta\in (\frac\pi 2, \min\{\vartheta_0,\vartheta_1\})$ and $\lambda\in \lambda_0+\Sigma_\vartheta$, the boundary value problem \eqref{3-2} is uniquely solvable with solution
  \[ U = U_1+U_2 = r_\Omega R(\lambda) e_\Omega F - L(\lambda)H(\lambda)B(D)r_\Omega R(\lambda)e_\Omega F.\]
  Consequently, we have to show the $\mathcal R$-boundedness of the operator family
  \begin{equation}
    \label{3-3}
    \lambda(\lambda-A_{p,\Omega})^{-1} = r_\Omega \lambda R(\lambda) e_\Omega - \lambda L(\lambda) H(\lambda) B(D) r_\Omega R(\lambda) e_\Omega\quad (\lambda\in\lambda_0+\Sigma_\vartheta).
  \end{equation}
  By Corollary~\ref{2.5} c), $\lambda R(\lambda)$ is $\mathcal R$-bounded. As $e_\Omega$ and $r_\Omega$ are continuous and $\lambda$-independent, we obtain
  \begin{equation}
    \label{3-4}
    \mathcal R_{L(\E_p^{(0)}(\Omega))} \big(\big\{ \lambda r_\Omega R(\lambda) e_\Omega\colon \lambda\in \lambda_0+\Sigma_\vartheta\big\}\big) <\infty.
  \end{equation}
  Similarly, by Theorem~\ref{3.1} with $j=2$ and $j=4$ we see
  \begin{equation}
    \label{3-5}
    \mathcal R_{L(\mathbb X_p(\Omega), \E_p^{(0)}(\Omega))} \big(\big\{ \lambda L(\lambda)\colon \lambda\in \lambda_0+\Sigma_\vartheta\big\}\big) <\infty.
  \end{equation}
  It remains to show that the family $H(\lambda)B(D)r_\Omega R(\Lambda) e_\Omega$ is $\mathcal R$-bounded. By the definition of the matrix $B(D)$ and the spaces, we see that the operators
  \begin{align*}
    B(D)&\colon \E_p^{(2)}(\Omega)\to \mathbb G_p(\Omega),\\
    B(D)&\colon \E_p^{(1)}(\Omega) \to H_p^1(\Omega)\times L^p(\Omega)\times L^p(\Omega),\\
    B_1(D)&\colon \E_p^{(0)}(\Omega)\to L^p(\Omega)
  \end{align*}
  are continuous, where $B_1(D)$ stands for the first row of $B(D)$, i.e. $B_1(D)(u,v,\theta)^\top = (\Delta-(1-\beta)\Delta') u + \theta$. Thus,
  \begin{equation}
    \label{3-6}
    \begin{pmatrix}
      B(D) \\ B(D)\\ B_1(D)
    \end{pmatrix}
    \colon \E_p^{(2)}(\Omega) \times \E_p^{(1)}(\Omega)\times \E_p^{(0)}(\Omega) \to \mathbb X_p(\Omega)
  \end{equation}
  is continuous (and independent of $\lambda$). By Corollary~\ref{2.5} c), the family
  \[ \left\{ \begin{pmatrix}
    R(\lambda)\\ \lambda^{1/2} R(\lambda)\\ \lambda R(\lambda)
  \end{pmatrix}
  \colon \lambda\in\lambda_0+\Sigma_\vartheta\right\}\subset L\big( \E_p^{(0)}(\Omega), \E_p^{(2)}(\Omega) \times \E_p^{(1)}(\Omega)\times \E_p^{(0)}(\Omega)\big)\]
  is $\mathcal R$-bounded. In combination with
  \[ H(\lambda)B(D)r_\Omega R(\lambda) e_\Omega F =\begin{pmatrix}
    B(D) r_\Omega R(\lambda)e_\Omega F\\
   B(D) r_\Omega \lambda^{1/2} R(\lambda)e_\Omega F\\
    B_1(D) r_\Omega \lambda R(\lambda)e_\Omega F
  \end{pmatrix}\]
  and \eqref{3-6}, this yields
  \begin{equation}
    \label{3-7}
    \mathcal R_{L(\E_p^{(0)}(\Omega), \mathbb X_p(\Omega))} \big\{ H(\lambda) B(D) r_\Omega R(\lambda) e_\Omega: \lambda\in \lambda_0+\Sigma_\vartheta\big\} <\infty.
  \end{equation}
  From \eqref{3-4}, \eqref{3-5}, and \eqref{3-7}, the first statement of the theorem follows by the description of the resolvent in \eqref{3-3}. As before, the other statements follow by the general theory of $\mathcal R$-boundedness.
\end{proof}

The results of Theorem~\ref{3.2} are preserved under lower-order perturbations of the operators $A(D)$ and $B(D)$. More precisely, we consider perturbation matrices of the form
\begin{align*}
  A'(D) & = \begin{pmatrix}
    0 & 0 & 0 \\
    a_{21}(x,D) & 0 & a_{23}(x,D)\\
    0 & a_{32}(x,D) & a_{33}(x,D)
  \end{pmatrix},\\
  B'(D) & = \begin{pmatrix}
    b_{11}(x,D) & 0 & 0 \\
    b_{21}(x,D) & 0 & 0 \\
    0 & 0 & b_{33}(x,D)
  \end{pmatrix}.
  \end{align*}
Here $a_{ij}(x,D)$ and $b_{ij}(x,D)$ are linear differential operators. With respect to the orders of the operators, we assume $\ord a_{ij}(x,D)\le s_{ij}$ and $\ord b_{ij}(x,D)\le t_{ij}$ with $s_{21}=3,\, s_{23}=s_{32}=s_{33}=1$ and $t_{11}=1,\, t_{21}=2,\, t_{33}=0$. The coefficients of $a_{ij}(x,D)$ are assumed to belong to $H_\infty^{s_{ij}-1}(\Omega)$, while the coefficients of $b_{11}(x,D),\, b_{21}(x,D)$, and $b_{33}(x,D)$ are assumed to belong to $H_\infty^2(\Omega)$, $H_\infty^1(\Omega)$, and $H_\infty^1(\Omega)$, respectively.

\begin{lemma}
  \label{3.3} Let $(A'(D),B'(D))$ be a lower-order perturbation as described above. Define the perturbed operator $\tilde A_{p,\Omega}\colon \E_p^{(0)}\supset D(\tilde A_{p,\Omega})\to \E_p^{(0)}(\Omega)$ by
  \[ D(\tilde A_{p,\Omega}) := \{ U\in \E_p^{(2)}(\Omega): \tilde B(D) U =0 \},\; \tilde A_{p,\Omega}U := \tilde A(D) U,\]
  where $\tilde A(D) := A(D) + A'(D)$ and $\tilde B(D) := B(D) + B'(D)$.

  Then there exist $\lambda_0>0$ and $\vartheta>\frac\pi 2$ such that the operator $\tilde A_{p,\Omega}-\lambda_0$ is $\mathcal R$-sectorial with $\mathcal R$-angle $\vartheta$. In particular, $\tilde A_{p,\Omega}$ has maximal $L^q$-$L^p$-regularity in every finite time interval and generates an analytic $C_0$-semigroup in $\E_p^{(0)}(\Omega)$.
\end{lemma}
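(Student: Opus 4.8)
The plan is to reduce the perturbed problem to the unperturbed one treated in Theorem~\ref{3.2} via a Neumann-series / absorption argument based on the $\mathcal R$-sectoriality already established. The key observation is that every entry of $A'(D)$ has order strictly less than the corresponding entry of $A(D)$, and similarly for $B'(D)$ versus $B(D)$, in the sense of the mixed-order (Douglis--Nirenberg) structure encoded in the spaces $\E_p^{(j)}(\Omega)$. Concretely, the second row of $A'(D)$ maps $\E_p^{(2)}(\Omega)$ into $H_p^{\epsilon}$-type spaces with a gain of at least one derivative over the target $H_p^0(\Omega)$, and likewise the third row; the same holds for $B'(D)$ relative to $\mathbb G_p(\Omega)$. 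Thus $A'(D)$ and $B'(D)$ are "lower order" in a way that produces, after composition with the resolvent, operator families carrying a negative power of $(|\lambda|^{1/2}+\ldots)$, hence vanishing $\mathcal R$-bounds as $\lambda_0\to\infty$.

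First I would recall from Theorem~\ref{3.2} that there are $\lambda_0'>0$, $\vartheta>\frac\pi2$ with $A_{p,\Omega}-\lambda$ boundedly invertible and $\lambda(\lambda-A_{p,\Omega})^{-1}$ $\mathcal R$-bounded on $\lambda_0'+\Sigma_\vartheta$; in fact, tracking the proof, the solution operator $(\lambda-A_{p,\Omega})^{-1}$ maps $\E_p^{(0)}(\Omega)$ into $\E_p^{(2)}(\Omega)$ with $\mathcal R_{L(\E_p^{(0)},\E_p^{(2-k)})}(\{\lambda^{k/2}(\lambda-A_{p,\Omega})^{-1}\})<\infty$ for $k=0,1,2$ (this follows verbatim from \eqref{3-4}, \eqref{3-5}, \eqref{3-7}, using Theorem~\ref{3.1} and Lemma~\ref{2.4} with the corresponding values of $j$). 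Next, for $F\in\E_p^{(0)}(\Omega)$ I would write the perturbed resolvent equation $(\lambda-\tilde A(D))U=F$, $\tilde B(D)U=0$ as
\[ (\lambda-A(D))U = F + A'(D)U \quad\text{in }\Omega,\qquad B(D)U = -B'(D)U \quad\text{on }\Gamma, \]
and seek $U$ as a fixed point. For this I need a solution operator for the unperturbed problem with \emph{inhomogeneous} boundary data; that is exactly the content combining Theorem~\ref{3.1} (boundary data) and the homogeneous-data resolvent $(\lambda-A_{p,\Omega})^{-1}$ (interior data), so I can define $\mathcal S(\lambda)\colon \E_p^{(0)}(\Omega)\times\mathbb G_p(\Omega)\to\E_p^{(2)}(\Omega)$ solving $(\lambda-A(D))U=F$, $B(D)U=G$, with appropriate $\mathcal R$-bounds on $\lambda^{k/2}\mathcal S(\lambda)$.

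The crux is then to show that the operator family
\[ \mathcal K(\lambda) := \mathcal S(\lambda)\,\bigl(A'(D)\,\cdot\,,\; -B'(D)\,\cdot\,\bigr)\;\in\; L(\E_p^{(2)}(\Omega)) \]
has $\mathcal R$-bound strictly less than $1$ on $\lambda_0+\Sigma_\vartheta$ for $\lambda_0$ large. This is where the order conditions $s_{21}=3$, $s_{23}=s_{32}=s_{33}=1$ and $t_{11}=1$, $t_{21}=2$, $t_{33}=0$ are used: each entry of $A'(D)$, read as a map out of $\E_p^{(2)}(\Omega)$, lands in a space with one more derivative than the natural target, and interpolation with the $\mathcal R$-bound on $\lambda^{1/2}\mathcal S(\lambda)$ (resp.\ the boundary-data analogue from Theorem~\ref{3.1} with consecutive values of $j$) converts that half-unit gain into a factor $|\lambda|^{-1/2}$, which can be absorbed by choosing $\lambda_0$ large — this uses that the $H_\infty$-regularity assumed on the coefficients suffices to keep the lower-order terms bounded on the relevant Sobolev scale, together with the composition property of $\mathcal R$-bounded families and the fact that multiplication by a fixed $H_\infty^m$-function is bounded on $H_p^k$ for $k\le m$. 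Once $\mathcal R(\{\mathcal K(\lambda)\})\le\frac12$, the Neumann series $(I-\mathcal K(\lambda))^{-1}=\sum_{n\ge0}\mathcal K(\lambda)^n$ converges with a uniform $\mathcal R$-bound (by the ideal and sum properties of $\mathcal R$-boundedness), so $U=(I-\mathcal K(\lambda))^{-1}\mathcal S(\lambda)(F,0)$ solves the perturbed problem and
\[ (\lambda-\tilde A_{p,\Omega})^{-1} = r_\Omega\circ(I-\mathcal K(\lambda))^{-1}\mathcal S(\lambda)(\,\cdot\,,0) \]
inherits the $\mathcal R$-sectoriality, with the same angle $\vartheta$ and a possibly larger $\lambda_0$.

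I expect the main obstacle to be the bookkeeping in the absorption step: one must verify, entry by entry in both $A'(D)$ and $B'(D)$, that the combination of (i) the derivative count of the perturbation, (ii) the mapping properties of $\mathcal S(\lambda)$ between the mixed-order spaces $\E_p^{(j)}(\Omega)$ with the correct power of $\lambda$, and (iii) the $H_\infty$-coefficient regularity indeed yields an honest negative power of $|\lambda|$ after composition — in particular the boundary terms $b_{11}$ (order $1$, coefficients in $H_\infty^2$) and $b_{21}$ (order $2$, coefficients in $H_\infty^1$) sit at the borderline and require using Theorem~\ref{3.1} with the precise pairing of $j$-values to the components of $H(\lambda)$. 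Everything else (well-definedness of the fixed point, that the limit lies in $D(\tilde A_{p,\Omega})$, and that the construction is consistent on the overlap of $\Sigma_{\vartheta_0}$, $\Sigma_{\vartheta_1}$, $\Sigma_\vartheta$) is routine, and the final conclusions on maximal $L^q$-$L^p$-regularity and generation of an analytic $C_0$-semigroup follow from the general theory exactly as in Theorem~\ref{3.2}.
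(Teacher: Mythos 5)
Your overall strategy (absorb the perturbation by a Neumann series built on the solution operators of the unperturbed problem) is the right one, and it is also the paper's strategy for the boundary perturbation. However, there is a genuine gap in the way you set up the fixed point. You iterate $\mathcal K(\lambda)=\mathcal S(\lambda)(A'(D)\,\cdot\,,-B'(D)\,\cdot\,)$ on $\E_p^{(2)}(\Omega)$ with its $\lambda$-independent norm. The boundary contribution to $\mathcal K(\lambda)$ is $-L(\lambda)H(\lambda)B'(D)$, and Theorem~\ref{3.1} controls $L(\lambda)$ only as a map out of the parameter-weighted space $\mathbb X_p(\Omega)$. Hence the best bound you can quote is
\[
\|L(\lambda)H(\lambda)B'(D)U\|_{\E_p^{(2)}(\Omega)}\le C\,\|H(\lambda)B'(D)U\|_{\mathbb X_p(\Omega)},
\]
and the last component of $H(\lambda)B'(D)U$ is $\lambda\, b_{11}(x,D)u$, whose $L^p$-norm grows like $|\lambda|\,\|u\|_{H^1_p}$ and cannot be dominated by $\|U\|_{\E_p^{(2)}(\Omega)}$ uniformly in $\lambda$. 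So from the available estimates your $\mathcal K(\lambda)$ is not even uniformly $\mathcal R$-bounded on $\E_p^{(2)}(\Omega)$, let alone of $\mathcal R$-bound $\le\frac12$ for large $\lambda_0$; the half-derivative gain of $B'(D)$ cannot be converted into a factor $|\lambda|^{-1/2}$ in this ordering, because the $j$-ladder of Theorem~\ref{3.1} applies to $L(\lambda)$ acting \emph{from} $\mathbb X_p(\Omega)$, not to $L(\lambda)H(\lambda)$ acting on boundary data measured in a better Sobolev space. The paper's proof avoids exactly this: it iterates the \emph{conjugated} operator $H(\lambda)B'(D)L(\lambda)\in L(\mathbb X_p(\Omega))$, for which the estimates of Theorem~\ref{3.1} with $j=1,2,3$ (paired componentwise with the three blocks of $H(\lambda)$) yield $\mathcal R$-boundedness of $\lambda^{1/2}H(\lambda)B'(D)L(\lambda)$, hence smallness for large $\lambda_0$; the inverse is then transported back via the algebraic identity $H(\lambda)\bigl(1-B'(D)L(\lambda)H(\lambda)\bigr)^{-1}=\bigl(1-H(\lambda)B'(D)L(\lambda)\bigr)^{-1}H(\lambda)$.

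A related, milder problem affects your treatment of $A'(D)$: the family $\mathcal S(\lambda)(A'(D)\,\cdot\,,0)$ is uniformly $\mathcal R$-bounded on $\E_p^{(2)}(\Omega)$ but not small, and your appeal to ``interpolation'' to extract $|\lambda|^{-1/2}$ is not justified at the level of $\mathcal R$-bounds with the estimates the paper provides (Lemma~\ref{2.4} and Theorem~\ref{3.1} give no gain for data in $\E_p^{(1)}$ rather than $\E_p^{(0)}$). The paper sidesteps this by treating $A'(D)$ separately as a relatively bounded perturbation, using the interpolation inequality to get relative bound $\epsilon$ and then the abstract perturbation theorem for $\mathcal R$-sectorial operators (\cite{kunstmann-weis04}, Corollary~6.7). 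If you want to keep a single fixed-point argument, you would have to prove additional parameter-dependent multiplier and solution-operator estimates with better source spaces; as written, the smallness of $\mathcal R(\{\mathcal K(\lambda)\})$ does not follow.
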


\begin{proof}
  \textbf{(i)} First, we consider boundary perturbations, i.e. $A'(D)=0$. As in the proof of Theorem~\ref{3.2}, we have to find a solution $\tilde U_2$ of the boundary value problem
  \begin{equation}
    \label{3-8}
    \begin{aligned}
      (\lambda - A(D)) \tilde U_2 & = 0 \quad \text{ in }\Omega,\\
      \tilde B(D) \tilde U_2 & = G\quad\text{ on }\Gamma,
    \end{aligned}
  \end{equation}
  where $G:=-\tilde B(D) r_\Omega R(\lambda) e_\Omega F$. We set $U:=L(\lambda) H(\lambda) G$. Then $U$ solves
  \begin{align*}
    (\lambda-A(D)) U & = 0\quad\text{ in }\Omega,\\
    \tilde B(D) U & = G-B'(D) U = (1-B'(D)L(\lambda)H(\lambda)) G =: \tilde G \quad\text{ on }\Gamma.
  \end{align*}
  Let $\lambda_0$ and $\vartheta$ be as in the proof of Theorem~3.2. We show that the operator family
  \[ \big\{ \lambda^{1/2} H(\lambda) B'(D) L(\lambda):\lambda\in \lambda_0+\Sigma_\vartheta\big\}\subset L(\mathbb X_p(\Omega))\]
  is $\mathcal R$-bounded. In fact, due to the assumptions on $B'(D)$, we have
  \begin{align*}
    b_{11}(x,D) & \in L(H_p^{4-j}(\Omega),H_p^{3-j}(\Omega))\quad (j=1,2,3),\\
    b_{21}(x,D) & \in L(H_p^{4-j}(\Omega),H_p^{2-j}(\Omega))\quad(j=1,2),\\
    b_{33}(x,D) & \in L(H_p^{2-j}(\Omega), H_p^{2-j}(\Omega))\quad(j=1,2).
  \end{align*}
  By Theorem~\ref{3.1}, the families
  \begin{align*}
    \big\{ \lambda^{j/2} L_1(\lambda):\lambda\in\lambda_0+\Sigma_\vartheta\big\} & \subset L(\mathbb X_p(\Omega), H_p^{4-j}(\Omega))\quad (j=0,\ldots,4),\\
    \big\{ \lambda^{j/2} L_2(\lambda):\lambda\in\lambda_0+\Sigma_\vartheta\big\} & \subset L(\mathbb X_p(\Omega), H_p^{2-j}(\Omega))\quad (j=0,1,2)
  \end{align*}
  are $\mathcal R$-bounded. By composition, we see that the family
  \[ \left\{ \lambda^{1/2} H(\lambda) B'(D)L(\lambda) = \begin{pmatrix}
    \lambda^{1/2} B'(D) L(\lambda)\\
    \lambda B'(D) L(\lambda)\\
    \lambda^{3/2} b_{11}(x,D)L_1(\lambda)
  \end{pmatrix}: \lambda\in \lambda_0+\Sigma_\vartheta\right\}\subset L(\mathbb X_p(\Omega))\]
  is $\mathcal R$-bounded. Choosing $\lambda_1>\lambda_0$ sufficiently large, we obtain
  \begin{equation}
    \label{3-9}
    \mathcal R_{L(\mathbb X_p(\Omega))} \Big( \big\{H(\lambda)B'(D)L(\lambda)\colon \lambda\in\lambda_1+\Sigma_\vartheta\big\}\Big) \le \frac 12.
  \end{equation}
  Therefore, $1-H(\lambda)B'(D)L(\lambda)\in L(\mathbb X_p(\Omega))$ is invertible for all $\lambda\in \lambda_1+\Sigma_\vartheta$. A simple algebraic calculation shows that this implies that also $1-B'(D)L(\lambda)H(\lambda)\in L(\mathbb G_p(\Omega))$ is invertible, and that we have
  \begin{equation}
    \label{3-10}
    H(\lambda) (1-B'(D)L(\lambda)H(\lambda))^{-1} = (1-H(\lambda)B'(D)L(\lambda))^{-1} H(\lambda).
  \end{equation}
  Setting $\tilde U_2 := L(\lambda)H(\lambda)(1-B'(D)L(\lambda)H(\lambda))^{-1} G$, we obtain $(\lambda-A(D))\tilde U_2=0$ and
  \[ \tilde B(D)\tilde U_2 = (1-B'(D)L(\lambda)H(\lambda)) (1-B'(D)L(\lambda)H(\lambda))^{-1} G = G,\]
  i.e., $\tilde U_2$ is a solution of \eqref{3-8}. As in the proof of Theorem~\ref{3.2}, the solution $\tilde U$ of the resolvent equation is now given by $\tilde U = U_1 + \tilde U_2$ with $U_1 := r_\Omega R(\lambda) e_\Omega$ as in the proof of Theorem~\ref{3.2}. Therefore, the resolvent of $\tilde A_{p,\Omega}$ is given by
  \begin{align*}
    (\lambda-\tilde A_{p,\Omega})^{-1} & = r_\Omega R(\lambda) e_\Omega - L(\lambda) H(\lambda) (1-B'(D)L(\lambda)H(\lambda))^{-1}\tilde B(D) r_\Omega R(\lambda) e_\Omega\\
    & = r_\Omega R(\lambda) e_\Omega - L(\lambda) (1-H(\lambda) B'(D)L(\lambda))^{-1} H(\lambda) \tilde B(D) r_\Omega R(\lambda)e_\Omega,
  \end{align*}
  where we used \eqref{3-10} for the last equality. We have already seen in the proof of Theorem~\ref{3.2} that the operator families $\lambda L(\lambda)$ and $H(\lambda)\tilde B(D) r_\Omega R(\lambda) e_\Omega$ are $\mathcal R$-bounded. Using \eqref{3-9} and a Neumann series argument, we see that
  \[ \mathcal R_{L(\mathbb X_p)} \big( \big\{ (1-H(\lambda)B'(D)L(\lambda))^{-1}: \lambda\in\lambda_1+\Sigma_\vartheta\big\}\big) \le 2.\]
  Now the statements of the lemma follow in the same way as in the proof of Theorem~\ref{3.2}.

  \medskip

  \textbf{(ii)} In the case $A'(D)\not=0$, we consider $(\tilde A(D), \tilde B(D))$ as a perturbation of $(A(D),\tilde B(D))$. Let $\tilde A_{\tilde B}$ and $A_{\tilde B}$ denote the corresponding operators, respectively. Note that we have $D(\tilde A_{\tilde B})=D(A_{\tilde B})$. By the interpolation inequality, for every $\epsilon>0$ there exists $C_\epsilon>0$ such that
  \[ \| \tilde A_{\tilde B} u \|_{\E_p^{(0)}(\Omega)} \le \epsilon \| A_{\tilde B} u\|_{\E_p^{(0)}(\Omega)} + C_\epsilon \|u\|_{\E_p^{(0)}(\Omega)} \quad (u\in D(A_{\tilde B})).\]
  Due to part (i) of the proof, $A_{\tilde B}$ is $\mathcal R$-sectorial, and by an abstract perturbation result on $\mathcal R$-sectorial operators (\cite{kunstmann-weis04}, Corollary~6.7), the same holds for $\tilde A_{\tilde B}$.
\end{proof}

\begin{remark}
  \label{3.4}
  Whereas the lower-order perturbation of the operator $A(D)$ could be handled by an abstract perturbation result on $\mathcal R$-boundedness, to our knowledge there is no such theorem on boundary perturbation which could be applied to our situation. Therefore, the proof of Lemma~\ref{3.3} directly uses the structure of the solution operators.
\end{remark}

The results above were formulated in a general setting in $\R^N$ with $N\ge 2$. In the physically relevant case $N=2$, the modelling can be found in \cite{lagnese89}, Chapter~2. Apart from physical constants, the equation in a uniform $C^4$-domain $\Omega\subset\R^2$ is given by
\begin{equation}
  \label{3-11}
  \begin{aligned}
    u_{tt}+\Delta^2 u + \Delta \theta & = 0\quad\text{ in } (0,\infty)\times\Omega,\\
    \theta_t - \Delta \theta - \Delta u_t & = 0\quad\text{ in }(0,\infty)\times \Omega
  \end{aligned}
\end{equation}
with boundary conditions
\begin{equation}
  \label{3-12}
  \begin{aligned}
    \Delta u + (1-\mu) B_1 u + \theta & = 0 \quad\text{ on }(0,\infty)\times \Gamma,\\
    \partial_\nu \Delta u + (1-\mu)B_2 u + \partial_\nu\theta & = 0 \quad\text{ on } (0,\infty)\times \Gamma,\\
    \partial_\nu\theta & = 0\quad\text{ on }(0,\infty)\times\Gamma.
  \end{aligned}
\end{equation}
Here, the operators $B_1$ and $B_2$ are given by
\begin{align*}
  B_1 u & := 2\nu_1\nu_2 u_{xy} -\nu_1^2 u_{yy} - \nu_2^2 u_{xx},\\
  B_2 u &:= \partial_\tau\big[ (\nu_1^2-\nu_2^2) u_{xy} + \nu_1\nu_2 (u_{yy}-u_{xx})\big],
\end{align*}
where $\nu = \binom{\nu_1}{\nu_2}$ denotes the outer normal vector and $\tau:=\binom{-\nu_2}{\nu_1}$. In \cite{lasiecka-triggiani98c}, the following variant of boundary conditions was considered:
\begin{equation}
  \label{3-13}
  \begin{aligned}
    \Delta u + (1-\mu) B_1 u + \theta & = 0 \quad\text{ on }(0,\infty)\times \Gamma,\\
    \partial_\nu \Delta u + (1-\mu)B_2 u -u + \partial_\nu\theta & = 0 \quad\text{ on } (0,\infty)\times \Gamma,\\
    \partial_\nu\theta + b\theta & = 0\quad\text{ on }(0,\infty)\times\Gamma
  \end{aligned}
\end{equation}
with $b>0$.

\begin{corollary}
  \label{3.5} Let $N=2$, and let $\Omega\subset\R^N$ be a uniform $C^4$-domain. Then the statements of Theorem~\ref{3.2} hold for the operators related to the boundary value problems \eqref{3-11}, \eqref{3-12} and  \eqref{3-11}, \eqref{3-13}.
\end{corollary}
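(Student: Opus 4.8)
The plan is to recognize both boundary value problems as admissible lower-order perturbations of the model problem $(A(D),B(D))$ treated in Theorem~\ref{3.2} and then to invoke Lemma~\ref{3.3}. The differential equations \eqref{3-11} are precisely \eqref{1-1} with vanishing right-hand sides; hence, in the notation of Lemma~\ref{3.3}, we take $A'(D)=0$ and only have to compare the boundary operators, after fixing $\beta:=\mu$ in \eqref{1-3}.

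For this comparison one uses the classical relations between the two-dimensional operators $B_1,B_2$ and the Laplace--Beltrami operator on the $C^2$-curve $\Gamma\subset\R^2$. A direct computation gives $-B_1u=\tau^\top(\mathrm{Hess}\,u)\,\tau$ and $\Delta u+B_1u=\partial_\nu^2u$ on $\Gamma$ (with $\tau$ the unit tangent); combined with the standard identity $\Delta u=\partial_\nu^2u+H\,\partial_\nu u+\Delta'u$ on $\Gamma$, where $H$ is the curvature of $\Gamma$, this yields $B_1u=-\Delta'u-H\,\partial_\nu u$. Likewise $B_2u=\partial_\tau\big(\nu^\top(\mathrm{Hess}\,u)\,\tau\big)$, whose third-order principal part coincides with that of $\partial_\nu\Delta'u=\partial_\nu\partial_\tau^2u$. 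Consequently, the boundary conditions \eqref{3-12} equal the free boundary conditions \eqref{1-3} (with $g_j=0$ and $\beta=\mu$) plus a perturbation $B'(D)$ of exactly the form considered before Lemma~\ref{3.3}: its $(1,1)$-entry is the first-order operator $b_{11}(x,D)=-(1-\mu)H\,\partial_\nu$, its $(2,1)$-entry is $b_{21}(x,D)=(1-\mu)(B_2-\partial_\nu\Delta')$ of order $\le2$, its $(3,3)$-entry vanishes, and all remaining entries are zero because the $\theta$-terms in the first two rows of \eqref{3-12} already occur in \eqref{1-3}. Since $\Omega$ is a uniform $C^4$-domain, the unit normal and the curvature of $\Gamma$ are of class $C^3$ and $C^2$, respectively, with bounded derivatives, so the coefficients of $b_{11}(x,D)$ lie in $H_\infty^2(\Omega)$ and those of $b_{21}(x,D)$ in $H_\infty^1(\Omega)$, as required ($t_{11}=1$, $t_{21}=2$, $t_{33}=0$). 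For \eqref{3-13} one additionally absorbs the zeroth-order term $-u$ in the second boundary line into $b_{21}(x,D)$ (which stays of order $\le2$ with $H_\infty^1$-coefficients) and sets $b_{33}(x,D)$ equal to multiplication by the constant $b$, a zeroth-order operator with coefficient in $H_\infty^1(\Omega)$; this again fits the structure before Lemma~\ref{3.3}.

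Thus in both cases $(\tilde A(D),\tilde B(D))=(A(D),B(D)+B'(D))$ with an admissible lower-order (purely boundary) perturbation, and Lemma~\ref{3.3} applies and gives $\lambda_0>0$ and $\vartheta>\frac\pi2$ such that $\tilde A_{p,\Omega}-\lambda_0$ is $\mathcal R$-sectorial with $\mathcal R$-angle $\vartheta$, whence maximal $L^q$-$L^p$-regularity on finite time intervals and generation of an analytic $C_0$-semigroup follow, which is precisely the assertion. The step requiring the most care is this translation of the genuinely two-dimensional operators $B_1,B_2$ into the Laplace--Beltrami formulation modulo lower-order terms, together with the bookkeeping that the remainders $b_{11},b_{21},b_{33}$ really satisfy the order and coefficient-regularity bounds of Lemma~\ref{3.3}; it is here, and only here, that the $C^4$-regularity of $\Gamma$ enters. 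Everything else is a direct citation of Lemma~\ref{3.3}.
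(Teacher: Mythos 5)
Your proposal is correct and follows exactly the paper's route: identify \eqref{3-12} and \eqref{3-13} as lower-order boundary perturbations of \eqref{1-3} (with $\beta=\mu$) via the relations $B_1u=-\Delta'u$ and $B_2u=\partial_\nu\Delta'u$ modulo lower-order terms, and then invoke Lemma~\ref{3.3}. You merely spell out the ``straight-forward calculation'' and the coefficient/order bookkeeping that the paper leaves implicit, including the extra terms $-u$ and $b\theta$ in \eqref{3-13}.
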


\begin{proof}
  A straight-forward calculation shows that $B_1u = -\Delta'u $ and $B_2 u = \partial_\nu\Delta'u$ holds up to lower-order terms. Therefore, we can apply Lemma~\ref{3.3} to both boundary value problems.
\end{proof}

Finally, we study exponential stability in the case of a bounded domain.

\begin{theorem}
  \label{3.6}
  Let $\Omega\subset\R^N$, $N\ge 2$, be a bounded $C^4$-domain, and let $(T(t))_{t\ge 0}\subset L(\E_{p}^{(0)}(\Omega))$ be the $C_0$-semigroup generated by $A_{p,\Omega}$, see Theorem~\ref{3.2}. Let $P_{p,\Omega}\in L(\E_{p}^{(0)}(\Omega))$ denote the spectral projection corresponding to the eigenvalue $0$ of $A_{p,\Omega}$, and let $(T_0(t))_{t\ge 0}\subset L(\ker P_{p,\Omega})$ be the part of $T(t)$ in $\ker P_{p,\Omega}$, i.e., $T_0(t):= T(t)|_{\ker P_{p,\Omega}}$.

  Then $(T_0(t))_{t\ge 0}$ is exponentially stable, i.e., there exist $C>0$ and $\epsilon>0$ such that $\|T(t)\|_{L(\ker P_{p,\Omega})}\le C e^{-\epsilon t}\;(t\ge 0)$. The same holds for the perturbed problem $\tilde A_{p,\Omega}$ as in Lemma~\ref{3.3}.
\end{theorem}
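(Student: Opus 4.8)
The plan is to pin down $\sigma(A_{p,\Omega})$ exactly and then conclude by abstract semigroup theory. First I would note that on a bounded $C^4$-domain the embedding $\E_p^{(2)}(\Omega)\hookrightarrow\E_p^{(0)}(\Omega)$ is compact, so, since $\rho(A_{p,\Omega})\neq\emptyset$ by Theorem~\ref{3.2}, the operator $A_{p,\Omega}$ has compact resolvent and discrete spectrum, consisting of isolated eigenvalues of finite algebraic multiplicity. Theorem~\ref{3.2} moreover gives $\sigma(A_{p,\Omega})\subset\C\setminus(\lambda_0+\Sigma_\vartheta)$ with $\vartheta>\frac\pi2$; since $\cos\vartheta<0$, a one-line estimate shows that $\bigl(\C\setminus(\lambda_0+\Sigma_\vartheta)\bigr)\cap\{\Re\mu\ge-\delta\}$ is bounded for every $\delta\ge0$, hence $\{\mu\in\sigma(A_{p,\Omega}):\Re\mu\ge-\delta\}$ is finite. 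Thus it suffices to prove $\sigma(A_{p,\Omega})\setminus\{0\}\subset\{\Re\mu<0\}$, because finiteness then yields a $\delta>0$ with $\sigma(A_{p,\Omega})\setminus\{0\}\subset\{\Re\mu\le-\delta\}$.

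To prove this spectral statement I would first reduce to $p=2$: the resolvents $(\mu-A_{p,\Omega})^{-1}$ are consistent on the common (nonempty) resolvent set, and every eigenfunction solves the Douglis--Nirenberg elliptic problem $(\mu-A(D))U=0$, $B(D)U=0$, so by elliptic regularity $\sigma(A_{p,\Omega})$ is independent of $p$. For $p=2$, let $\mu$ be an eigenvalue with eigenfunction $U=(u,v,\theta)\neq0$, $v=\mu u$, so that $\mu v+\Delta^2u+\Delta\theta=0$ and $\mu\theta-\Delta\theta-\Delta v=0$ in $\Omega$, together with the homogeneous conditions \eqref{1-3}. I would pair the first equation with $\bar v$ and the second with $\bar\theta$ in $L^2(\Omega)$, integrate by parts using Green's formula for the plate bilinear form $a(\cdot,\cdot)$ --- the first two conditions in \eqref{1-3} rewrite the bending-moment and shear boundary terms as terms involving $\theta$, and $\partial_\nu\theta=0$ disposes of the remaining boundary terms --- and add the complex conjugate of the first identity to the second. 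The boundary coupling terms and the terms $\int_\Omega\nabla u\cdot\nabla\bar\theta\,dx$ then cancel, leaving
\[ \bar\mu\,\|v\|_{L^2(\Omega)}^2+\mu\,a(u,u)+\mu\,\|\theta\|_{L^2(\Omega)}^2+\|\nabla\theta\|_{L^2(\Omega)}^2=0, \]
with $a(u,u)\ge0$ (positivity of the plate form as in \cite{lagnese89}).

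Taking real parts gives $\Re\mu\bigl(\|v\|_{L^2(\Omega)}^2+a(u,u)+\|\theta\|_{L^2(\Omega)}^2\bigr)+\|\nabla\theta\|_{L^2(\Omega)}^2=0$. If $\Re\mu>0$, both summands vanish, so $v=\theta=0$ and then $u=v/\mu=0$, contradicting $U\neq0$. If $\Re\mu=0$, write $\mu=i\tau$ with $\tau\in\R$: then $\theta$ is constant and the equations reduce to $\Delta^2u=\tau^2u$ and $\theta=\Delta u$ in $\Omega$ (the latter since $\mu\neq0$); as $\theta$ is constant, $\Delta u$ is constant, so $\Delta^2u=0$, whence $\tau^2u=0$ and, for $\tau\neq0$, $u=0$ and $\theta=\Delta u=0$, again a contradiction. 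Hence $\sigma(A_{2,\Omega})\cap\{\Re\mu\ge0\}\subseteq\{0\}$, and by the first paragraph some $\delta>0$ satisfies $\sigma(A_{p,\Omega})\setminus\{0\}\subset\{\Re\mu\le-\delta\}$ for every $p\in(1,\infty)$.

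To conclude, if $0\notin\sigma(A_{p,\Omega})$ the claim is immediate from $s(A_{p,\Omega})\le-\delta$; otherwise $0$ is an isolated eigenvalue, the decomposition $\E_p^{(0)}(\Omega)=\operatorname{Ran}P_{p,\Omega}\oplus\ker P_{p,\Omega}$ reduces $A_{p,\Omega}$, and $A_0:=A_{p,\Omega}|_{\ker P_{p,\Omega}}$ satisfies $\sigma(A_0)=\sigma(A_{p,\Omega})\setminus\{0\}\subset\{\Re\mu\le-\delta\}$. Being the part of a sectorial operator in a complemented reducing subspace, $A_0$ is sectorial, so $(T_0(t))_{t\ge0}$ is analytic, and for analytic semigroups the growth bound equals the spectral bound; thus $\omega_0(T_0)=s(A_0)\le-\delta$, which gives $\|T_0(t)\|_{L(\ker P_{p,\Omega})}\le Ce^{-\epsilon t}$ for any $0<\epsilon<\delta$. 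For the perturbed operator $\tilde A_{p,\Omega}$ of Lemma~\ref{3.3} the first and last paragraphs are unchanged, and $\sigma(\tilde A_{p,\Omega})\setminus\{0\}\subset\{\Re\mu<0\}$ would be checked by redoing the energy identity: for the boundary conditions \eqref{3-13}, for instance, after taking real parts it acquires the extra nonnegative contribution $b\int_\Gamma|\theta|^2\,d\sigma+\Re\mu\int_\Gamma|u|^2\,d\sigma$ from the terms $b\theta$ and $-u$, which only helps (cf.\ \cite{lasiecka-triggiani98c}). The hard part will be the energy identity --- verifying that \eqref{1-3} is exactly the boundary condition for which the boundary contributions cancel --- together with the purely imaginary case, which rests on the unique-continuation-type observation that $\Delta^2u=\tau^2u$ with constant temperature and the reduced boundary conditions forces $u=0$.
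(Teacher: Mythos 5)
Your proposal is correct and follows essentially the same route as the paper: compact resolvent and discrete spectrum on the bounded domain, $p$-independence of the spectrum, reduction to $L^2$ where one shows $\sigma(A_{2,\Omega})\cap\{\mu\in\C:\Re\mu\ge 0\}\subset\{0\}$, and then general semigroup theory for the part of $T(t)$ in $\ker P_{p,\Omega}$. The only real difference is one of detail: where the paper simply cites \cite{lasiecka-triggiani98c} for dissipativity of $A_{2,\Omega}$ and for the absence of nonzero eigenvalues on the imaginary axis, you derive both from the energy identity $\bar\mu\|v\|^2+\mu\,a(u,u)+\mu\|\theta\|^2+\|\nabla\theta\|^2=0$ and from the observation that a constant $\theta=\Delta u$ forces $\Delta^2u=0$, hence $\tau^2u=0$ --- a correct and self-contained argument, so your closing worry about needing a unique-continuation step for the purely imaginary case is unfounded.
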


\begin{proof}
  As $\Omega$ is bounded, the operator $A_{p,\Omega}$ has compact resolvent and discrete spectrum. Moreover, the spectrum is independent of $p\in (1,\infty)$. It was shown in \cite{lasiecka-triggiani98c} that $A_{2,\Omega}$ is dissipative which implies $\sigma(A_{2,\Omega})\subset \{\lambda\in\C: \Re\lambda\le 0\}$. Moreover, 0 is the only eigenvalue on the imaginary axis. Now the statements of the theorem follow from general semigroup theory.
\end{proof}

\begin{corollary}
  \label{3.7}
  Let $N=2$, and let $\Omega\subset\R^2$ be a bounded $C^4$-domain. Then the analytic semigroup related to the boundary value problem \eqref{3-11}, \eqref{3-12} is exponentially stable in the space $\ker P_{p,\Omega}$, and the analytic semigroup related to \eqref{3-11}, \eqref{3-13} is exponentially stable in the whole space $\E_p^{(0)}(\Omega)$.
\end{corollary}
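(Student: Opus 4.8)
The plan is to derive Corollary~\ref{3.7} directly from Theorem~\ref{3.6} together with Corollary~\ref{3.5}, in the same spirit in which Corollary~\ref{3.5} was deduced from Lemma~\ref{3.3}. By Corollary~\ref{3.5} the operators $A_{p,\Omega}$ associated with \eqref{3-11}, \eqref{3-12} and with \eqref{3-11}, \eqref{3-13} are lower-order perturbations (in the sense of Lemma~\ref{3.3}) of the model operator, hence generate analytic $C_0$-semigroups on $\E_p^{(0)}(\Omega)$; since $\Omega$ is bounded these have compact resolvent, so the spectrum is discrete and $p$-independent. Thus the only thing that distinguishes the two boundary conditions is the location of the spectrum relative to the imaginary axis, and this is an $L^2$-question that was settled in \cite{lasiecka-triggiani98c}.

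First I would treat \eqref{3-11}, \eqref{3-12}. Here the associated operator coincides with $A_{p,\Omega}$ up to the lower-order identifications $B_1u = -\Delta' u$, $B_2 u = \partial_\nu\Delta' u$ modulo lower-order terms, so Theorem~\ref{3.6} applies verbatim: $0$ is the only spectral point on the imaginary axis, it is a semisimple eigenvalue, and on $\ker P_{p,\Omega}$ the semigroup decays exponentially. This gives the first assertion. I would remark that the kernel is nontrivial precisely because the constant displacement $u\equiv\const$, $v\equiv 0$, $\theta\equiv 0$ solves the stationary problem with these boundary conditions (the pair $(B_1,B_2)$ annihilates affine functions, $\Delta u=0$, $\partial_\nu\theta=0$), which is why one cannot expect stability on the full space.

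For \eqref{3-11}, \eqref{3-13} the point is that the extra zeroth-order boundary terms $-u$ in the second line and $b\theta$ with $b>0$ in the third line are again admissible lower-order perturbations, so Corollary~\ref{3.5} already yields generation of an analytic semigroup; but now the dissipativity computation in \cite{lasiecka-triggiani98c} shows that $A_{2,\Omega}$ has \emph{no} spectrum on the imaginary axis at all, i.e.\ $\sigma(A_{2,\Omega})\subset\{\Re\lambda<0\}$, the boundary term $b\theta$ with $b>0$ killing the would-be zero eigenvalue and preventing $\theta$ from being constant unless $\theta\equiv 0$, and the $-u$ term removing the constant-displacement mode. By $p$-independence of the discrete spectrum and the spectral bound equal growth bound property for analytic semigroups, the semigroup is then uniformly exponentially stable on all of $\E_p^{(0)}(\Omega)$.

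The only genuine point requiring care is the verification that the zero eigenvalue truly disappears under \eqref{3-13} and that the relevant spectral facts transfer from $p=2$ to general $p$; both are covered by citing the $L^2$-analysis of \cite{lasiecka-triggiani98c} and the $p$-independence of the spectrum for operators with compact resolvent arising from such elliptic boundary value problems, exactly as in the proof of Theorem~\ref{3.6}. Hence the proof is short:

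\begin{proof}
  By Corollary~\ref{3.5}, the operators associated with \eqref{3-11}, \eqref{3-12} and with \eqref{3-11}, \eqref{3-13} generate analytic $C_0$-semigroups on $\E_p^{(0)}(\Omega)$ and, $\Omega$ being bounded, have compact resolvent; in particular their spectra are discrete and independent of $p\in(1,\infty)$. For \eqref{3-11}, \eqref{3-12} the associated operator is, up to lower-order terms, the operator $A_{p,\Omega}$ of Theorem~\ref{3.2}, so Theorem~\ref{3.6} gives exponential stability of $(T_0(t))_{t\ge 0}$ on $\ker P_{p,\Omega}$. For \eqref{3-11}, \eqref{3-13} it was shown in \cite{lasiecka-triggiani98c} that the corresponding operator $A_{2,\Omega}$ is dissipative and has no eigenvalue on the imaginary axis, so $\sigma(A_{2,\Omega})\subset\{\lambda\in\C:\Re\lambda<0\}$; by $p$-independence of the spectrum the same holds for general $p$, and since the generated semigroup is analytic, the spectral bound coincides with the growth bound. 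Hence there exist $C>0$ and $\epsilon>0$ with $\|T(t)\|_{L(\E_p^{(0)}(\Omega))}\le Ce^{-\epsilon t}$ for all $t\ge 0$.
\end{proof}
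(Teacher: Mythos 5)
Your proposal is correct and follows essentially the same route as the paper: Corollary~\ref{3.7} is deduced as a particular case of Theorem~\ref{3.6} (which already covers the perturbed operators of Lemma~\ref{3.3}), with the only additional observation being that for \eqref{3-11}, \eqref{3-13} the operator has no eigenvalue on the imaginary axis by \cite{lasiecka-triggiani98c}, so the stability holds on all of $\E_p^{(0)}(\Omega)$. The extra details you supply (semisimplicity of $0$, the constant-displacement mode, spectral bound equals growth bound for analytic semigroups) are consistent with, and merely elaborate on, the paper's one-line argument.
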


\begin{proof}
  This is a particular case of Theorem~\ref{3.6} where we note that in the case of \eqref{3-11}, \eqref{3-13} there is no eigenvalue on the imaginary axis due to \cite{lasiecka-triggiani98c}.
\end{proof}

\end{document}